\documentclass{article}
\usepackage{hyperref}
\usepackage[utf8]{inputenc}
\usepackage{dsfont}
\usepackage{amsthm}
\usepackage{amsfonts}
\usepackage{amssymb}
\usepackage{amsmath}
\usepackage[english]{babel}
\theoremstyle{definition}
\newtheorem{conj}{Conjecture}[section]
\theoremstyle{definition}
\newtheorem{thm}[conj]{Theorem}
\theoremstyle{definition}
\newtheorem{cor}[conj]{Corollary}
\theoremstyle{definition}
\newtheorem{prop}[conj]{Proposition}
\theoremstyle{definition}

\theoremstyle{definition}
\newtheorem{definition}[conj]{Definition}
\theoremstyle{definition}
\newtheorem{rem}[conj]{Remark}
\theoremstyle{definition}

\usepackage{graphicx} 

\title{On $C$-Algebraic and $C$-Arithmetic Automorphic Representations }
\author{Alfio Fabio La Rosa}
\date{}

\begin{document}

\maketitle

\begin{abstract} The main purpose of this article is to begin an investigation based on the trace formula of the conjecture of K. Buzzard and T. Gee proposing that every $C$-algebraic automorphic representation is $C$-arithmetic. In the setting of anisotropic groups, we prove a criterion which shows that, under certain conditions, the trace formula can be used to establish that an automorphic representation is $C$-arithmetic. This criterion does not require the existence of a geometric model for the non-Archimedean part of the representation: it only depends on the possibility of isolating a finite family of automorphic representations containing the given one, and on a certain arithmetic condition on the Archimedean orbital integrals. As an application, we give a new proof that automorphic representations with a regular discrete series as Archimedean component are $C$-arithmetic, and we suggest a possible road to study automorphic representations with more general Archimedean type. In the last section, we show that the conjecture for a general reductive group can be reduced to the analogous statement for cuspidal $C$-algebraic automorphic representations.

\end{abstract}

\tableofcontents

\section{Introduction} 

Understanding the arithmetic of Hecke operators acting on automorphic representations is, by now, a classical problem. In their article \cite{BG}, K. Buzzard and T. Gee introduced the notion of $C$-algebraic automorphic representation: this is an Archimedean notion, in the sense that it depends only on the infinitesimal character of the Archimedean component of the automorphic representation. They conjectured that $C$-algebraic automorphic representations are $C$-arithmetic; that is: there exists a number field that contains all the unramified Hecke eigenvalues (see Section 2 for precise definitions). \\

To the best of our knowledge, the only automorphic representations which are known to be $C$-arithmetic are those whose non-Archimedean part contributes to the cohomology of a locally symmetric space or to the coherent cohomology of a Shimura variety. \\

Many $C$-algebraic automorphic representations do not admit such a geometric realisation. It is for this reason that we begin an investigation of the conjecture of K. Buzzard and T. Gee based on the trace formula. \\

It seems that the only attempt to employ the trace formula to study this kind of question has been made by A. Knightly and C. Li in \cite{KL}: they showed that the Hecke eigenvalues of automorphic representations of $\text{GL}_{2}$ whose Archimedean component is an integrable discrete series are algebraic integers. Their method requires an explicit computation of the geometric side of the trace formula and it does not establish the $C$-arithmetic property.\\

In the context of connected semisimple anisotropic algebraic groups defined over $\mathbb{Q}$, we establish the following criterion (see Corollary \ref{Main}, sections 2 and 3 for notation and for the relevant choices of Haar measures):

\begin{thm}  Let $G$ be a connected, semisimple, anisotropic algebraic group defined over $\mathbb{Q}$. Let $\pi_{0}=\pi_{0,\infty}\otimes \pi_{0}^{\infty}$ be an irreducible automorphic representation of $G(\mathbb{A}_{\mathbb{Q}})$. Assume the existence of a function $f\in C^{\infty}_{c}(G(\mathbb{R}))$ verifying the following conditions:
\begin{enumerate}
\item[(1)] There exists a finite family $\mathcal{C}$ of irreducible automorphic representations of $G(\mathbb{A}_{\mathbb{Q}})$ containing $\pi_{0}$ such that:
\begin{enumerate}
\item[(i)] If $\pi\in\mathcal{C}$, then $\pi^{\infty,K^{\infty}}\neq\{0\}$ and $\text{Tr}\pi_{\infty}(f)=1$.
\item[(ii)] If $\pi\notin \mathcal{C}$, then $\text{Tr}\pi_{\infty}(f)=0$.
\end{enumerate}
\item[(2)] There exists a number field $E$ such that, for every $\gamma\in \{G(\mathbb{Q})\}$, the Archimedean orbital integral $\mathcal{O}_{\gamma_{\infty}}(f)$ belongs to $E$. 
\end{enumerate}
Then there exists a number field $E'$ and a finite set of places, $S$, containing $\infty$ and all the places at which $\pi_{0}$ ramifies, such that for every $p\notin S$, and for every $h\in\mathcal{H}_{\mathbb{Q}}(G(\mathbb{Q}_{p}),K_{p})$, the eigenvalue of the operator $$\pi_{0,p}(h):\pi_{0,p}^{K_{p}}\longrightarrow \pi_{0,p}^{K_{p}}$$ is contained in $E'$; that is: the representation $\pi_{0}$ is $C$-arithmetic.
\end{thm}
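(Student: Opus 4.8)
The plan is to run the simple trace formula on a test function which picks out $\mathcal{C}$ at $\infty$ and is a $\mathbb{Q}$-rational Hecke operator away from a large finite set $S$, to read off that a fixed positive-integer combination of Hecke eigenvalue systems takes values in $E$, and then to exploit the $\mathrm{Aut}(\mathbb{C}/E)$-equivariance of this identity to force the eigensystem of $\pi_{0}$ to have finite $\mathrm{Aut}(\mathbb{C}/E)$-orbit; $C$-arithmeticity will follow by a transcendence argument. To carry this out, first enlarge $S$ so that, besides $\infty$ and the ramification of $\pi_{0}$, it contains every prime at which $K^{\infty}=\prod_{p}K_{p}$ fails to be hyperspecial; then every $\pi\in\mathcal{C}$ is unramified outside $S$ and $\dim\pi_{p}^{K_{p}}=1$ for $p\notin S$. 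Since $G$ is anisotropic, $G(\mathbb{Q})\backslash G(\mathbb{A}_{\mathbb{Q}})$ is compact, every element of $G(\mathbb{Q})$ is semisimple, and for any $\Phi\in C^{\infty}_{c}(G(\mathbb{A}_{\mathbb{Q}}))$ the simple trace formula $\sum_{\pi}m(\pi)\,\mathrm{Tr}\,\pi(\Phi)=\sum_{\{\gamma\}}\mathrm{vol}\big(G_{\gamma}(\mathbb{Q})\backslash G_{\gamma}(\mathbb{A}_{\mathbb{Q}})\big)\,\mathcal{O}_{\gamma}(\Phi)$ holds, the geometric side running over $G(\mathbb{Q})$-conjugacy classes, only finitely many of which meet $\mathrm{supp}\,\Phi$. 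Apply it to $\Phi=f\otimes h\otimes\mathbf{1}_{K_{S}}$, where $h=\bigotimes_{p\notin S}h_{p}$ ranges over $\mathcal{H}^{S}_{\mathbb{Q}}:=\bigotimes'_{p\notin S}\mathcal{H}_{\mathbb{Q}}(G(\mathbb{Q}_{p}),K_{p})$ and $\mathbf{1}_{K_{S}}$ is the normalised idempotent of $K_{S}:=\prod_{p\in S\setminus\{\infty\}}K_{p}$. By hypothesis (1) only the $\pi\in\mathcal{C}$ survive on the spectral side, each contributing with $\mathrm{Tr}\,\pi_{\infty}(f)=1$; since $\dim\pi_{p}^{K_{p}}=1$ for $p\notin S$, the operator $h$ acts on $\pi^{\infty,K^{\infty}}$ by the scalar $\lambda_{\pi}(h):=\prod_{p\notin S}\lambda_{\pi_{p}}(h_{p})$, so the spectral side equals $\sum_{\pi\in\mathcal{C}}a_{\pi}\lambda_{\pi}(h)$ with $a_{\pi}:=m(\pi)\dim\pi_{S}^{K_{S}}\in\mathbb{Z}_{>0}$. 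With the Haar measures fixed in Sections 2--3, the geometric side lies in $E$: indeed $\mathcal{O}_{\gamma_{\infty}}(f)\in E$ by hypothesis (2), the finite orbital integrals $\mathcal{O}_{\gamma_{p}}(h_{p})$ and $\mathcal{O}_{\gamma_{p}}(\mathbf{1}_{K_{p}})$ are rational, and $\mathrm{vol}(G_{\gamma}(\mathbb{Q})\backslash G_{\gamma}(\mathbb{A}_{\mathbb{Q}}))$ is rational as well — $G_{\gamma}$ is reductive with $\mathbb{Q}$-anisotropic centre, so $G_{\gamma}(\mathbb{A}_{\mathbb{Q}})^{1}=G_{\gamma}(\mathbb{A}_{\mathbb{Q}})$ and its Tamagawa number is a positive rational. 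Hence $\sum_{\pi\in\mathcal{C}}a_{\pi}\lambda_{\pi}(h)\in E$ for all $h\in\mathcal{H}^{S}_{\mathbb{Q}}$.

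Next I would group $\mathcal{C}$ by its Hecke eigensystem away from $S$. Let $\Lambda^{(1)}=\lambda_{\pi_{0}},\dots,\Lambda^{(r)}$ be the distinct characters of $\mathcal{H}^{S}_{\mathbb{Q}}$ occurring among $\{\lambda_{\pi}\}_{\pi\in\mathcal{C}}$, with positive-integer weights $b_{j}:=\sum_{\lambda_{\pi}=\Lambda^{(j)}}a_{\pi}$, so that $\sum_{j}b_{j}\Lambda^{(j)}(h)\in E$ for every $h\in\mathcal{H}^{S}_{\mathbb{Q}}$. Applying $\sigma\in\mathrm{Aut}(\mathbb{C}/E)$ to this identity — legitimate, since $h$ has $\mathbb{Q}$-coefficients and the value lies in $E$ — yields $\sum_{j}b_{j}\,(\sigma\circ\Lambda^{(j)})=\sum_{j}b_{j}\,\Lambda^{(j)}$ as functionals on $\mathcal{H}^{S}_{\mathbb{Q}}$. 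Since $\sigma\circ\Lambda^{(j)}$ and $\Lambda^{(j)}$ are ring homomorphisms into a field, distinct ones among them are linearly independent, and the $b_{j}$ are positive integers; this forces $\sigma$ to permute the set $\{\Lambda^{(1)},\dots,\Lambda^{(r)}\}$ (preserving the weights). In particular $\sigma\circ\lambda_{\pi_{0}}\in\{\Lambda^{(1)},\dots,\Lambda^{(r)}\}$, so the $\mathrm{Aut}(\mathbb{C}/E)$-orbit of $\lambda_{\pi_{0}}$ has at most $r$ elements.

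Finally, for fixed $h\in\mathcal{H}^{S}_{\mathbb{Q}}$ the complex number $\lambda_{\pi_{0}}(h)$ has $\mathrm{Aut}(\mathbb{C}/E)$-orbit of size at most $r$; a number transcendental over $E$ has infinite $\mathrm{Aut}(\mathbb{C}/E)$-orbit (for instance via the $E$-automorphisms $t\mapsto ct$, $c\in E^{\times}$, extended to $\mathbb{C}$, and $E$ is infinite), so $\lambda_{\pi_{0}}(h)$ is algebraic over $E$ of degree $\le r$, hence algebraic over $\mathbb{Q}$ of degree $\le r[E:\mathbb{Q}]$. Therefore the subring $\lambda_{\pi_{0}}(\mathcal{H}^{S}_{\mathbb{Q}})\subseteq\overline{\mathbb{Q}}$ — and, degrees being multiplicative, its fraction field $E'$ — consists of algebraic numbers of bounded degree over $\mathbb{Q}$; an algebraic extension of $\mathbb{Q}$ all of whose elements have bounded degree is finite, so $E'$ is a number field, and it contains $\lambda_{\pi_{0},p}(h_{p})=\lambda_{\pi_{0}}\!\big(h_{p}\otimes\bigotimes_{q\neq p}\mathbf{1}_{K_{q}}\big)$ for every $p\notin S$ and $h_{p}\in\mathcal{H}_{\mathbb{Q}}(G(\mathbb{Q}_{p}),K_{p})$, which is exactly the $C$-arithmeticity of $\pi_{0}$. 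The step I expect to require the most care is showing, in the first part, that the geometric side lands in $E$: for an arbitrary normalisation the volumes $\mathrm{vol}(G_{\gamma}(\mathbb{Q})\backslash G_{\gamma}(\mathbb{A}_{\mathbb{Q}}))$ are transcendental (they involve special values of Artin $L$-functions as soon as $G_{\gamma}$ carries a nontrivial anisotropic central torus), so one must pin the measures down exactly as in Sections 2--3 — normalising the centraliser volumes through Tamagawa measure, using that anisotropy of $G$ forces $G_{\gamma}(\mathbb{A}_{\mathbb{Q}})^{1}=G_{\gamma}(\mathbb{A}_{\mathbb{Q}})$ and that Kottwitz's formula makes the Tamagawa number rational — and check that hypothesis (2) is taken relative to the matching archimedean normalisation. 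The remaining ingredients (validity of the simple trace formula for anisotropic $G$, linear independence of ring characters, the transcendence argument) are standard.
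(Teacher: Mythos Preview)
Your argument is correct and reaches the same conclusion, but by a genuinely different route than the paper's proof of Corollary~\ref{Main}. The paper first applies the trace formula to $f\otimes h^{*m}$ for all $m$ to obtain $\mathrm{Tr}(T_{h}^{m})\in E$, invokes the Newton--Girard identities to force the characteristic polynomial of $T_{h}$ to have coefficients in $E$, and then appeals to the linear-algebra Proposition~\ref{Gabor} (a primitive-element argument on a fixed finite-dimensional space of commuting diagonalisable operators) to produce the number field $E'$. You bypass both Newton--Girard and Proposition~\ref{Gabor}: from the single identity $\sum_{j}b_{j}\Lambda^{(j)}(h)\in E$ for all $h$, you act by $\mathrm{Aut}(\mathbb{C}/E)$ and use Dedekind's linear independence of ring characters to see that $\sigma$ permutes $\{\Lambda^{(1)},\dots,\Lambda^{(r)}\}$; hence every value $\lambda_{\pi_{0}}(h)$ has $\mathrm{Aut}(\mathbb{C}/E)$-orbit of size at most $r$, is therefore algebraic of bounded degree, and the usual ``bounded degree implies finite extension'' argument (itself a primitive-element step, parallel to what underlies Proposition~\ref{Gabor}) yields $E'$. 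What your approach buys is economy: you never need traces of powers or symmetric-function identities, because multiplicativity of the $\Lambda^{(j)}$ is already encoded in the Hecke algebra. What the paper's approach buys is a statement (Proposition~\ref{Gabor}) that is reusable for arbitrary commuting families of diagonalisable operators with $E$-rational characteristic polynomials, without reference to any ring structure on the index set. On the geometric side both arguments rest on exactly the same input, namely conditions (M1)--(M2) and hypothesis~(2); your caveat that this is where the normalisation of measures must be handled with care is well placed, and the paper resolves it by invoking Gross' canonical measure for (M1) rather than the Tamagawa-number reasoning you sketch.
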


The result above leads to a simple, non-cohomological proof that automorphic representations of connected semisimple anisotropic algebraic groups defined over $\mathbb{Q}$ whose Archimedean component is a regular discrete series are $C$-arithmetic (see Corollary \ref{Application}). \\

To treat more general automorphic representations, the requirement that the test function in the criterion be smooth and compactly supported is too restrictive. In view of the results in \cite{BPLZZ}, we should probably allow Schwartz functions. However, for us, the main point at this stage is making the case that the trace formula can be used to investigate arithmetic properties of automorphic representations. Since smooth compactly supported functions are enough for the application in this article, we decided to state our criterion in this form. \\

Concerning (1), the article \cite{BPLZZ} provides a very powerful method to isolate cuspidal automorphic representations and we are currently exploring the possibility of employing it in our context. One difficulty is that the multipliers constructed in \cite{BPLZZ} are global objects: roughly, it is needed to modify a given test function by introducing Hecke operators which are, a priori, not $\mathbb{Q}$-valued (see, for example, \cite{BPLZZ} Proposition 3.17, already in the case $M=G$). We cannot afford this freedom in our context: we can only use Archimedean multipliers.
For anisotropic groups, however, combining Lemma 2.18 and the `Archimedean part' of Proposition 3.17, it seems possible to satisfy (1) of the criterion. \\

Concerning (2), we don't feel confident enough to formulate a precise conjecture, but, at least in this simplified picture, it could indicate why a purely Archimedean condition ($C$-algebraicity) might constrain the behaviour of the non-Archimedean part of the automorphic representation. \\

The proof that the Hecke eigenvalues are algebraic (Proposition \ref{algtrace}) is inspired by the method in \cite{KL}. To establish the $C$-arithmetic property we exploit an observation of G. Wiese (Proposition \ref{Gabor}). \\ 

Having in mind the idea of employing the trace formula to treat non-cuspidal automorphic representations of more general reductive groups, in Section 4 we reduce the conjecture of K. Buzzard and T. Gee that $C$-algebraic automorphic representations are $C$-arithmetic to the analogous statement for cuspidal representations (Theorem \ref{redcusp}).

\begin{thm}\label{redcusp} Let $G$ be a connected, reductive group defined over a number field $F$. If every irreducible, $C$-algebraic, cuspidal automorphic representation of $G(\mathbb{A}_{F})$ is $C$-arithmetic, then every irreducible, $C$-algebraic automorphic representation of $G(\mathbb{A}_{F})$ is $C$-arithmetic, and every irreducible, $L$-algebraic automorphic representation of $G(\mathbb{A}_{F})$ is $L$-arithmetic. 
\end{thm}

\textbf{Acknowledgements:} I am grateful to J. R. Getz for encouraging me to write this article and for many insightful conversations. I wish to thank S-W. Shin and T. Gee for answering several questions. Finally, I wish to thank A-M. Aubert for her advice and G. Wiese, my supervisor, for all his support.

\section{Background}

Let us begin by recalling some notions introduced in \cite{BG}. Our treatment follows \cite{GH}, as well. The notions of $L$-algebraic and $L$-arithmetic automorphic representation are needed in Section 4 only.\\

Let $G$ be a connected reductive algebraic group defined over a number field $F$. Let $v$ be an Archimedean place of $F$, and $F_{v}$ be the completion of $F$ at $v$. We identify the algebraic closure $\overline{F}_{v}$ of $F_{v}$ with $\mathbb{C}$. Let $T_{v}$ be a maximal torus of $G_{\mathbb{C}}$ and $B_{v}$ a Borel subgroup  containing $T_{v}$. Let $\rho_{B_{v}}$ denote half of the sum of the positive roots corresponding to this choice, and write $X^{*}(T_{v})$ for the group of characters of $T_{v}$. Finally, let $\mathfrak{t}_{v}$ denote the Lie algebra of $T_{v}$ and write $\mathfrak{t}_{v}^{\mathbb{C}}:=\mathfrak{t}\otimes_{\mathbb{R}}\mathbb{C}$.

\begin{definition} An irreducible admissible representation $\pi$ of $G(F_{v})$ with infinitesimal character $\lambda_{\pi}$ is $C$-algebraic if $\lambda_{\pi}-\rho_{B_{v}}\in X^{*}(T_{v})$.  
\end{definition}

\begin{definition} An irreducible admissible representation $\pi$ of $G(F_{v})$ with infinitesimal character $\lambda_{\pi}$ is $L$-algebraic if $\lambda_{\pi}\in X^{*}(T_{v})$.  
\end{definition}

The reader familiar with these notions will realise that they are not formulated as in \cite{BG}. However, as it is explained in loc. cit. and in Lemma 12.8.1 in \cite{GH}, our formulation is equivalent to the one in \cite{BG}. Also, these notions are independent of the choice of $B_{v}$.\\

Let $\mathbb{A}_{F}$ denote the ad\`ele ring of $F$ and $\mathbb{A}^{\infty}_{F}$ (resp. $\mathbb{A}_{F,\infty}$) the ring of finite ad\`eles (resp. the Archimedean part of the ring of ad\`eles). Fix a compact open subgroup $K^{\infty}=\Pi_{v}K_{v}$ of $G(\mathbb{A}^{\infty}_{F})$ with $K_{v}$ a hyperspecial maximal compact subgroup equal to $G(\mathcal{O}_{v})$ for all $v$ at which $G(F_{v})$ is unramified. Let $K_{\infty}$ denote a maximal compact subgroup of $G(\mathbb{A}_{F,\infty})$ and form the compact subgroup \\ $K:=K_{\infty}\times K^{\infty}$ of $G(\mathbb{A}_{F})$. The local definitions above lead to the following global notions:

\begin{definition} An irreducible automorphic representation $\pi=\bigotimes_{v} \pi_{v}$ of $G(\mathbb{A}_{F})$ is $C$-algebraic if $\pi_{v}$ is $C$-algebraic for every Archimedean place $v$ of $F$. 
\end{definition}

\begin{definition} An irreducible automorphic representation $\pi=\bigotimes_{v} \pi_{v}$ of $G(\mathbb{A}_{F})$ is $L$-algebraic if $\pi_{v}$ is $L$-algebraic for every Archimedean place $v$ of $F$. 
\end{definition}

We refer the reader to \cite{BG} for a detailed discussion of the rationale for introducing these notions and for how they are related to other aspects of the Langlands Program. We proceed to introduce the purely non-Archimedean notions of $C$-arithmetic and $L$-arithmetic automorphic representations. \\

Let $v$ be a non-Archimedean place of $F$ at which $G(F_{v})$ is unramified. Let $\mathcal{H}_{\mathbb{C}}(G(F_{v}),K_{v})$ denote the $\mathbb{C}$-algebra of bi-$K_{v}$-invariant, compactly supported, complex-valued functions on $G(F_{v})$, and $\mathcal{H}_{\mathbb{Q}}(G(F_{v}),K_{v})$ denote the $\mathbb{Q}$-subalgebra of $\mathbb{Q}$-valued elements in $\mathcal{H}_{\mathbb{C}}(G(F_{v}),K_{v})$. It is well-known that if $\pi$ is a smooth irreducible admissible representation of $G(F_{v})$ with non-trivial $K_{v}$-invariant vectors, then $\mathcal{H}_{\mathbb{C}}(G(F_{v}),K_{v})$ acts by a character $$\mathcal{H}_{\mathbb{C}}(G(F_{v}),K_{v})\longrightarrow \mathbb{C}$$ on the $1$-dimensional space of $K_{v}$-invariants $\pi^{K_{v}}$. 

\begin{definition} Let $E$ be a number field. Let $v$ be a non-Archimedean place of $F$ at which $G(F_{v})$ is unramified. We say that a smooth irreducible admissible representation $\pi$ of $G(F_{v})$ is defined over $E$ if the image of the map $$\mathcal{H}_{\mathbb{Q}}(G(F_{v}),K_{v})\longrightarrow \mathbb{C}$$ induced by the character $\mathcal{H}_{\mathbb{C}}(G(F_{v}),K_{v})\longrightarrow \mathbb{C}$ is contained in $E$.
\end{definition}

\begin{definition} An irreducible automorphic representation $\pi=\bigotimes_{v} \pi_{v}$ of $G(\mathbb{A}_{F})$ is $C$-arithmetic if there exist a number field $E$ and a finite set of places, $S$, containing all the Archimedean places and all the places at which $\pi_{v}$ ramifies, such that $\pi_{v}$ is defined over $E$ for all $v\notin S$.
\end{definition}

It only remains to introduce the notion of $L$-arithmetic automorphic representation. \\

Again, let $v$ be a place of $F$ at which $G(F_{v})$ is unramified. Let $T_{v}$ be a maximal torus of $G(F_{v})$ and $B_{v}$ a Borel subgroup containing $T_{v}$. Recall that the Satake isomorphism identifies $\mathcal{H}_{\mathbb{C}}(G(F_{v}),K_{v})$ with $\mathbb{C}\left[X_{*}(T_{v,d})\right]^{W_{v,d}}$, where $X_{*}(T_{v,d})$ denotes the maximal split subtorus of $T_{v}$ and $W_{v,d}$ is the subgroup of the Weyl group of $G(F_{v})$ consisting of the elements leaving $T_{v,d}$ stable. Given a smooth irreducible admissible representation $\pi$ of $G(F_{v})$, we thus obtain a map $$\mathbb{C}\left[X_{*}(T_{v,d})\right]^{W_{v,d}}\longrightarrow \mathbb{C}$$ by pre-composing the character $\mathcal{H}_{\mathbb{C}}(G(F_{v}),K_{v})\longrightarrow \mathbb{C}$ with the Satake isomorphism.  

\begin{definition} Let $E$ be a number field. Let $v$ be a non-Archimedean place of $F$ at which $G(F_{v})$ is unramified and $\pi$ be a smooth irreducible admissible representation of $G(F_{v})$. We say that the Satake parameter of $\pi$ is defined over $E$ if the image of the map $$\mathbb{Q}\left[X_{*}(T_{v,d})\right]^{W_{v,d}}\longrightarrow \mathbb{C}$$ induced by the map $\mathbb{C}\left[X_{*}(T_{v,d})\right]^{W_{v,d}}\longrightarrow \mathbb{C}$ is contained in $E$.
\end{definition}

\begin{definition} An irreducible automorphic representation $\pi=\bigotimes_{v} \pi_{v}$ of $G(\mathbb{A}_{F})$ is $L$-arithmetic if there exist a number field $E$ and a finite set of places, $S$, containing all the Archimedean places and all the places at which $\pi_{v}$ ramifies, such that the Satake parameter of $\pi_{v}$ is defined over $E$ for all $v\notin S$. 
\end{definition}

As explained in \cite{BG}, the motivation for introducing two notions of arithmetic automorphic representations is the observation that $\mathcal{H}_{\mathbb{Q}}(G(F_{v}),K_{v})$ and $\mathbb{Q}\left[X_{*}(T_{v,d})\right]^{W_{v,d}}$ provide two $\mathbb{Q}$-structure of $\mathcal{H}_{\mathbb{C}}(G(F_{v}),K_{v})$ which, in general, do not coincide. In loc. cit., the following conjecture is proposed:

\begin{conj}\label{BGconj} Let $G$ be a connected reductive algebraic group defined over a number field $F$. An irreducible automorphic representation $\pi=\bigotimes_{v}\pi_{v}$ is $C$-algebraic (respectively, $L$-algebraic) if and only if it is $C$-arithmetic (respectively, $L$-arithmetic).
\end{conj}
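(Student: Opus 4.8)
The plan is to treat the two implications of the biconditional separately, since they are of entirely different natures, and I should say at the outset that I do not expect to carry either to a complete proof with present technology. For the direction ``$C$-algebraic $\Rightarrow$ $C$-arithmetic'' (and, by the same reduction, ``$L$-algebraic $\Rightarrow$ $L$-arithmetic''), the first step is to invoke Theorem \ref{redcusp}, which reduces the whole assertion to showing that every irreducible, $C$-algebraic, \emph{cuspidal} automorphic representation $\pi$ of $G(\mathbb{A}_F)$ is $C$-arithmetic. The second step is to attack this cuspidal case through the trace-formula criterion of Corollary \ref{Main}: in the model situation where $G$ is connected semisimple anisotropic over $\mathbb{Q}$ it suffices to produce $f\in C_c^\infty(G(\mathbb{R}))$ satisfying conditions (1) and (2) with $\pi_0=\pi$. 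To meet (1) I would isolate a finite family $\mathcal{C}$ by an Archimedean multiplier argument: combining an analogue of Lemma 2.18 of \cite{BPLZZ} with the Archimedean part of Proposition 3.17 of loc.\ cit., one can, for a fixed level $K^\infty$, cut out the finite set of automorphic representations that are $K^\infty$-spherical and whose Archimedean component has the prescribed infinitesimal character, and then normalise $f$ so that $\text{Tr}\,\pi_\infty(f)=1$ exactly on that set. To meet (2) one needs the orbital integrals $\mathcal{O}_{\gamma_\infty}(f)$ to lie in a number field; this is known for the discrete-series pseudocoefficient when $\pi_\infty$ is a regular discrete series (Corollary \ref{Application}), and for more general $\pi_\infty$ one would take $f$ to be an Euler--Poincar\'e or cohomological pseudocoefficient, or an average of matrix coefficients, and prove algebraicity of its orbital integrals from the invariant character theory of real reductive groups, using the explicit integral formulas for orbital integrals over compact Cartan subgroups and the rationality of discrete-series characters.

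The third step is to remove the anisotropy and the hypothesis $F=\mathbb{Q}$. Restriction of scalars disposes of general $F$; removing anisotropy forces one to replace the compact-quotient trace formula by Arthur's invariant trace formula, to allow Schwartz test functions at the Archimedean places as suggested in \cite{BPLZZ}, and to control the non-elliptic geometric terms and the continuous spectral contributions. Here the Levi-by-Levi architecture underlying Theorem \ref{redcusp} is what keeps the problem tractable in principle: at each stage the only genuinely new input is the cuspidal (or discrete) part, so one is never obliged to understand the full Eisenstein spectrum directly.

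The direction ``$C$-arithmetic $\Rightarrow$ $C$-algebraic'' (and ``$L$-arithmetic $\Rightarrow$ $L$-algebraic'') is, to my mind, the main obstacle, and it is of a completely different character: there is no known way to extract a constraint on the infinitesimal character of $\pi_\infty$ from the bare fact that the unramified Hecke eigenvalues, or the Satake parameters, of $\pi$ generate a number field. The only plausible route passes through the conjectural attachment to $\pi$ of a compatible system of $\ell$-adic Galois representations into the $C$-group: its Hodge--Tate--Sen weights at each prime would be integral, and matching them against the infinitesimal character (up to the shift by $\rho$) would force $C$-algebraicity. Since one has no construction of such Galois representations starting only from a rationality statement, and no purely Archimedean substitute --- the integrality of Satake parameters together with known Ramanujan-type bounds and the classification of the unitary dual is far too coarse to pin down $\lambda_\pi$ --- I expect this implication to remain out of reach in general, and I would regard the realistic aim of this programme as establishing the forward implication, via the criterion of Corollary \ref{Main}, in as broad a setting as the trace formula allows.
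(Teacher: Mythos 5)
There is a genuine gap, and it is unavoidable: the statement you are addressing is Conjecture \ref{BGconj}, the conjecture of Buzzard and Gee itself, which the paper does not prove and does not claim to prove --- it only establishes partial results in its direction (the criterion of Corollary \ref{Main}, its application to regular discrete series in Corollary \ref{Application}, and the reduction to cuspidal representations in Theorem \ref{redcusp}). Your text is accordingly a research programme, not a proof, and you say so yourself; but then none of its steps can be accepted as established. Concretely: (a) condition (1) of Corollary \ref{Main} for a general $C$-algebraic cuspidal $\pi$ is exactly the isolation problem the paper flags as open --- the multipliers of \cite{BPLZZ} are global and a priori not $\mathbb{Q}$-valued, only Archimedean multipliers are admissible here, and the paper only says it ``seems possible'' to satisfy (1) for anisotropic groups; (b) condition (2), the membership of the Archimedean orbital integrals in a number field, is known essentially only for the Euler--Poincar\'e/pseudocoefficient functions attached to regular discrete series via Clozel--Labesse, and the paper explicitly declines to formulate even a conjecture for general Archimedean type, so your appeal to ``rationality of discrete-series characters'' does not cover the general $C$-algebraic case (e.g.\ non-tempered or non-discrete-series $\pi_\infty$); (c) the passage beyond anisotropic groups requires replacing the compact-quotient trace formula by Arthur's invariant trace formula with non-compactly-supported test functions, controlling weighted orbital integrals and the continuous spectrum --- none of which is available in the criterion you invoke, which is stated and proved only for anisotropic $G/\mathbb{Q}$ with $f\in C_c^\infty(G(\mathbb{R}))$.

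Moreover, for the converse implications ($C$-arithmetic $\Rightarrow$ $C$-algebraic, $L$-arithmetic $\Rightarrow$ $L$-algebraic) you offer no argument at all beyond a conditional appeal to conjectural Galois representations valued in the $C$-group, which is circular as a proof strategy since those objects are themselves only conjecturally attached to (algebraic) automorphic representations. Your assessment of the state of the problem is accurate and your use of Theorem \ref{redcusp} to funnel both the $C$- and $L$-statements through the $C$-algebraic cuspidal case matches the paper's intent; but as a proof of Conjecture \ref{BGconj} the proposal fails at every load-bearing step, and no proof should be expected here, since the paper presents the statement as an open conjecture.
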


\section{Employing the Trace Formula}

Let $G$ be a connected, semisimple, anisotropic algebraic group defined over $\mathbb{Q}$. Let $\mathbb{A}_{\mathbb{Q}}$ denote the adeles of $\mathbb{Q}$ and $\mathbb{A}^{\infty}_{\mathbb{Q}}$ the finite adeles of $\mathbb{Q}$. Let $K:=K_{\infty}\times K^{\infty}$, where $K_{\infty}$ is a maximal compact subgroup of $G(\mathbb{R})$ and $K^{\infty}$ a compact open subgroup of $G(\mathbb{A}_{\mathbb{Q}}^{\infty})$ such that $K_{p}=G(\mathbb{Z}_{p})$ for every $p$ at which $G$ is unramified. \\

We recall that the right regular representation $(R,L^{2}(\left[G\right]))$, where\\ $\left[G\right]:=G(\mathbb{Q})\backslash G(\mathbb{A}_{\mathbb{Q}})$, decomposes into a Hilbert direct sum $$L^{2}(\left[G\right])=\bigoplus_{\pi\in\mathcal{A}(G)}m(\pi)\pi.$$
Here, $\mathcal{A}(G)$ denotes the set of equivalence classes of irreducible automorphic representations of $G(\mathbb{A}_{\mathbb{Q}})$, the multiplicity $m(\pi)$ is finite for every $\pi\in \mathcal{A}(G)$.\\

In the following, unless otherwise stated, $\pi$ will always denote an element in $\mathcal{A}(G)$.\\

If $f=f_{\infty}\otimes f^{\infty}$ is an element in $C_{c}^{\infty}(G(\mathbb{A}_{\mathbb{Q}}))$, then the operator $R(f)$ is trace-class and we have the well known trace formula $$\sum_{\pi\in\mathcal{A}(G)}m(\pi)\text{Tr}\pi_{\infty}(f_{\infty})\text{Tr}\pi^{\infty}(f^{\infty})=\sum_{\gamma\in \{G(\mathbb{Q})\}}\text{vol}(G_{\gamma}(\mathbb{Q})\backslash G_{\gamma}(\mathbb{A}_{\mathbb{Q}}))\mathcal{O}_{\gamma}(f),$$
where $\{G(\mathbb{Q})\}$ denotes the set of conjugacy classes of $G(\mathbb{Q})$, the orbital integral $\mathcal{O}_{\gamma}(f)$ is defined as $$\mathcal{O}_{\gamma}(f):=\int_{G_{\gamma}(\mathbb{A}_{\mathbb{Q}})\backslash G(\mathbb{A}_{\mathbb{Q}})}f(x^{-1}\gamma x)\,dx,$$ and $G_{\gamma}(\mathbb{Q})$ (resp. $G_{\gamma}(\mathbb{A}_{\mathbb{Q}})$) denotes the stabiliser of $\gamma$ in $G(\mathbb{Q})$ (resp. in $G(\mathbb{A}_{\mathbb{Q}})$).\\

Proposition \ref{algtrace} and Corollary \ref{Main} below require Haar measures on $G(\mathbb{A}_{\mathbb{
Q}})$ and on the centralisers $G_{\gamma}(\mathbb{A}_{\mathbb{Q}})$ satisfying two conditions:

\begin{enumerate} 
\item[(M1)] For every $\gamma\in G(\mathbb{Q})$, the quantity $\text{vol}(G_{\gamma}(\mathbb{Q})\backslash G_{\gamma}(\mathbb{A}_{\mathbb{Q}}))$ is a rational number.
\item[(M2)] For every non-Archimedean place, the local Haar measure assigns a rational number to every open compact subset. For every non-Archimedean place $p$ at which $G(\mathbb{Q}_{p})$ is unramified, the local Haar measure $\mu_{p}$ on $G({\mathbb{Q}_{p}})$ assigns measure $1$ to $G(\mathbb{Z}_{p})$.
\end{enumerate}

Realising condition (M1) is non-trivial: in Corollary 3.3 below we will choose Gross' measure to fulfil it.\newline

For the first part of condition (M2) we argue as follows. By V.5.2 in \cite{Ren}, $G(\mathbb{Q}_{p})$ admits a maximal compact subgroup $K_{0}$ with the following property: there exists a neighbourhood basis of the identity, say $\mathcal{B}$, consisting of open compact subgroups and such that every $C\in \mathcal{B}$ is a normal subgroup of $K_{0}$. Normalising the Haar measure on $G(\mathbb{Q}_{p})$ so that $K_{0}$ has measure $1$, we obtain a Haar measure that assigns a rational number to every $C\in\mathcal{B}$ and, therefore, it assigns a rational number to every open compact subset. For $p$ such that $G(\mathbb{Q}_{p})$ is unramified, this reasoning shows that it is possible to normalise the local Haar measure so that it gives measure $1$ to $G(\mathbb{Z}_{p})$ and it assigns a rational number to every compact open subsets of $G(\mathbb{Q}_{p})$.  \\

We begin by proving that, under certain conditions on the automorphic representation we want to study, the trace formula can be used to establish the algebraicity of the eigenvalues of a $\mathbb{Q}$-valued Hecke operator. This is inspired by the method developed in \cite{KL}: it relies on the Newton-Girard identities.

\begin{prop}\label{algtrace} Let $G$ be a connected, semisimple, anisotropic algebraic group defined over $\mathbb{Q}$. Let $\pi_{0}=\pi_{0,\infty}\otimes \pi_{0}^{\infty}$ be an irreducible automorphic representation of $G(\mathbb{A}_{\mathbb{Q}})$. Assume the existence of a function $f\in C^{\infty}_{c}(G(\mathbb{R}))$ verifying the following conditions:
\begin{enumerate}
\item[(1)] There exists a finite family $\mathcal{C}$ of irreducible automorphic representations of $G(\mathbb{A}_{\mathbb{Q}})$ containing $\pi_{0}$ such that:
\begin{enumerate}
\item[(i)] If $\pi\in\mathcal{C}$, then $\pi^{\infty,K^{\infty}}\neq \{0\}$ and $\text{Tr}\pi_{\infty}(f)=1$.
\item[(ii)] If $\pi\notin \mathcal{C}$, then $\text{Tr}\pi_{\infty}(f)=0$.
\end{enumerate}
\item[(2)] For every $\gamma\in \{G(\mathbb{Q})\}$, the local orbital integral $\mathcal{O}_{\gamma_{\infty}}(f)$ is an algebraic number. 
\end{enumerate}
Then, for every $h\in \mathcal{H}_{\mathbb{Q}}(G(\mathbb{A}_{\mathbb{Q}}^{\infty}),K^{\infty})$, the eigenvalues of the operator $$\pi_{0}^{\infty}(h):\pi_{0}^{\infty,K^{\infty}}\longrightarrow \pi_{0}^{\infty,K^{\infty}}$$ are algebraic.
\end{prop}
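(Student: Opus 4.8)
The plan is to feed the trace formula the test functions $f\otimes h^{*n}$, $n\ge 1$, where $h^{*n}$ is the $n$-fold convolution power of $h$, and then recover the eigenvalues from the resulting power sums via the Newton--Girard identities (this is the mechanism of \cite{KL}). Fix $h\in\mathcal{H}_{\mathbb{Q}}(G(\mathbb{A}_{\mathbb{Q}}^{\infty}),K^{\infty})$. Because the chosen measures are $\mathbb{Q}$-rational on compact open sets (condition (M2)), convolution preserves $\mathbb{Q}$-valuedness, so each $h^{*n}$ again lies in $\mathcal{H}_{\mathbb{Q}}(G(\mathbb{A}_{\mathbb{Q}}^{\infty}),K^{\infty})$, and it is locally constant and compactly supported, hence $f\otimes h^{*n}\in C^{\infty}_{c}(G(\mathbb{A}_{\mathbb{Q}}))$. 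On the spectral side of the trace formula for $f\otimes h^{*n}$ one has $\mathrm{Tr}\,\pi(f\otimes h^{*n})=\mathrm{Tr}\,\pi_{\infty}(f)\cdot\mathrm{Tr}\,\pi^{\infty}(h^{*n})$, so by (1) every term with $\pi\notin\mathcal{C}$ vanishes and the spectral side collapses to $\sum_{\pi\in\mathcal{C}}m(\pi)\,\mathrm{Tr}\,\pi^{\infty}(h^{*n})$. Since $h^{*n}$ is bi-$K^{\infty}$-invariant, $\pi^{\infty}(h^{*n})=\pi^{\infty}(h)^{n}$ has image in the finite-dimensional space $\pi^{\infty,K^{\infty}}$ (admissibility of $\pi^{\infty}$) and is zero on a complement, so $\mathrm{Tr}\,\pi^{\infty}(h^{*n})=\sum_{j}\lambda_{\pi,j}^{n}$, where $\lambda_{\pi,1},\dots,\lambda_{\pi,d_{\pi}}$ are the eigenvalues of $\pi^{\infty}(h)$ acting on $\pi^{\infty,K^{\infty}}$, listed with multiplicity. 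Hence the spectral side is the power sum $p_{n}:=\sum_{i=1}^{N}z_{i}^{n}$, where $(z_{i})_{i=1}^{N}$ is the list that repeats each $\lambda_{\pi,j}$ ($\pi\in\mathcal{C}$) exactly $m(\pi)$ times; this list is finite, i.e. $N<\infty$, because $\mathcal{C}$ is finite, each $m(\pi)$ is finite, and each $\pi^{\infty}$ is admissible.

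Next I would show every $p_{n}$ is algebraic by inspecting the geometric side. For $f\otimes h^{*n}\in C^{\infty}_{c}(G(\mathbb{A}_{\mathbb{Q}}))$ the sum over $\{G(\mathbb{Q})\}$ is finite (anisotropy of $G$), with each summand equal to $\mathrm{vol}(G_{\gamma}(\mathbb{Q})\backslash G_{\gamma}(\mathbb{A}_{\mathbb{Q}}))\cdot\mathcal{O}_{\gamma_{\infty}}(f)\cdot\mathcal{O}_{\gamma^{\infty}}(h^{*n})$. The volume is rational by (M1); $\mathcal{O}_{\gamma_{\infty}}(f)$ is algebraic by (2); and $\mathcal{O}_{\gamma^{\infty}}(h^{*n})$ is rational because $h^{*n}$ is a $\mathbb{Q}$-linear combination of characteristic functions of compact open double cosets and, with the normalisations of (M2) and the corresponding choice on $G_{\gamma}$, the orbital integral of such a function is the $\mathbb{Q}$-rational volume of a compact open set. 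Therefore $p_{n}\in\overline{\mathbb{Q}}$ for every $n\ge 1$.

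Finally I would invoke Newton--Girard: knowing $p_{1},\dots,p_{N}\in\overline{\mathbb{Q}}$, Newton's identities express the elementary symmetric functions $e_{1},\dots,e_{N}$ of $z_{1},\dots,z_{N}$ as polynomials with rational coefficients in $p_{1},\dots,p_{N}$, so $e_{1},\dots,e_{N}\in\overline{\mathbb{Q}}$ and the $z_{i}$ are the roots of a monic polynomial with algebraic coefficients, hence algebraic. Since $\pi_{0}\in\mathcal{C}$, the eigenvalues of $\pi_{0}^{\infty}(h)$ on $\pi_{0}^{\infty,K^{\infty}}$ occur among the $z_{i}$, which is the assertion. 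I expect the one genuinely delicate point to be the rationality (or even just the algebraicity) of the finite-adelic orbital integrals $\mathcal{O}_{\gamma^{\infty}}(h^{*n})$: since $G$ being anisotropic over $\mathbb{Q}$ does not force the centralisers $G_{\gamma}$ to be anisotropic over each $\mathbb{Q}_{p}$, one must check that the quotient measures entering these orbital integrals behave rationally on compact open sets — precisely what (M2), together with a committed measure normalisation (Gross' measures, as in Corollary \ref{Main}), is designed to guarantee. The remaining ingredients — the collapse of the spectral side, the finiteness of the eigenvalue list, and the Newton--Girard extraction — are routine.
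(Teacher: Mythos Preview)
Your proposal is correct and follows essentially the same route as the paper: apply the trace formula to $f\otimes h^{*n}$, collapse the spectral side via condition (1) to the power sums $p_n$ of the eigenvalues on $\bigoplus_{\pi\in\mathcal{C}} m(\pi)\,\pi^{\infty,K^{\infty}}$, check that every term on the geometric side is algebraic, and then recover the eigenvalues via Newton--Girard. The one point you correctly flag as delicate---rationality of the finite-adelic orbital integrals $\mathcal{O}_{\gamma^{\infty}}(h^{*n})$---is handled in the paper not by a direct volume argument but by invoking Theorem~3 of \cite{Assem} (together with (M2)), and the finiteness of the geometric sum is secured by Lemma~9.1 of \cite{A86}.
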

\begin{proof} Let $f\in C^{\infty}_{c}(G(\mathbb{R}))$ be as in the statement and consider $h\in\mathcal{H}_{\mathbb{Q}}(G(\mathbb{A}_{\mathbb{Q}}^{\infty}),K^{\infty})$.
Applying the trace formula to compute the trace of $R(f\otimes h)$, we obtain
\begin{equation*}
\begin{split}
\text{Tr}\left[R(f\otimes h)\right]&=\sum_{\pi\in\mathcal{A}(G)}m(\pi)\text{Tr}\pi_{\infty}(f)\text{Tr}\pi^{\infty}(h)\\
&=\sum_{\pi\in\mathcal{C}}m(\pi)\text{Tr}\pi^{\infty}(h)
\end{split}    
\end{equation*}
on the spectral side, and therefore
\begin{equation*}
\begin{split}
\sum_{\pi\in\mathcal{C}}m(\pi)\text{Tr}\pi^{\infty}(h)&=\sum_{\gamma\in \{G(\mathbb{Q})\}}\textrm{vol}(G_{\gamma}(\mathbb{Q})\backslash
G_{\gamma}(\mathbb{A}))\mathcal{O}_{\gamma}(f\otimes h).
\end{split}
\end{equation*}
The spectral side is equal to the trace of the operator $$T:=\bigoplus_{\pi\in\mathcal{C}}m(\pi)\pi^{\infty}(h):\bigoplus_{\pi\in\mathcal{C}}m(\pi)\pi^{\infty,K^{\infty}}\longrightarrow \bigoplus_{\pi\in\mathcal{C}}m(\pi)\pi^{\infty,K^{\infty}}.$$
The sum on the geometric side is finite by Lemma 9.1 of \cite{A86} and the volume terms are rational numbers by condition M1. Each global orbital integral can be factored into a product of an Archimedean orbital integral and a non-Archimedean one. Since the Hecke operator $h$ is $\mathbb{Q}$-valued, the non-Archimedean orbital integrals are rational numbers by condition (M2) and Theorem 3 in \cite{Assem}. The Archimedean orbital integrals are algebraic numbers by (2). It follows that the trace of the operator $T$ is algebraic. Next, we show that, for every positive integer $m$ less than or equal to the dimension of the vector space $$\bigoplus_{\pi\in\mathcal{C}}m(\pi)\pi^{\infty,K^{\infty}},$$ the trace of $T^{m}$ is algebraic. First, we observe that $$T^{m}=\bigoplus_{\pi\in\mathcal{C}}m(\pi)\left[\pi^{\infty}(h)\right]^{m}=\bigoplus_{\pi\in\mathcal{C}}m(\pi)\pi^{\infty}(h^{*m}),$$ where $h^{*m}$ denotes the convolution of $h$ with itself $m$ times. Using the test function $f\otimes h^{*m}$, and arguing as above, we see that the trace of $T^{m}$ is algebraic. Applying the Newton-Girard identities (Proposition 28.1 in \cite{KL} and the discussion preceeding it), it follows that the coefficients of the characteristic polynomial of $T$ are algebraic numbers, and so must be the eigenvalues of $T$. Since $\pi_{0}\in\mathcal{C}$, the eigenvalues of $\pi^{\infty}_{0}(h)$ are in particular eigenvalues of $T$, and the result follows.  

\end{proof}
 
The criterion to establish that an automorphic representation is $C$-arithmetic requires strengthening condition (2) in the proposition above. We will also need the following key result, for which we are grateful to G. Wiese.

\begin{prop}\label{Gabor} Let $V$ be a finite-dimensional complex vector space of dimension $n$. Let $\mathcal{T}_{\mathbb{Q}}:=\{T_{\alpha}:V\longrightarrow V\}_{\alpha\in A}$ be a family of diagonalisable, commuting linear operators, closed under taking finite $\mathbb{Q}$-linear combinations. Assume the existence of a number field $E$ such that the characteristic polynomial $\text{char}_{\alpha}(x)$ of $T_{\alpha}$ has coefficients in $E$ for every $\alpha\in A$. Then there exists a number field $E'$ such that the eigenvalues of $T_{\alpha}$ belong to $E'$ for every $\alpha\in A$.
\end{prop}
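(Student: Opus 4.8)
The plan is to exploit simultaneous diagonalisability together with the fact that the family $\mathcal{T}_{\mathbb{Q}}$ is closed under $\mathbb{Q}$-linear combinations, so that a single generic element of the family already separates all the joint eigenspaces. First I would fix a basis of $V$ in which every $T_{\alpha}$ is diagonal (this is possible since the $T_{\alpha}$ are commuting and each is diagonalisable, hence simultaneously diagonalisable). Let $\chi_{1},\dots,\chi_{r}$ be the distinct joint eigencharacters appearing, i.e. the distinct functions $A\to\mathbb{C}$, $\alpha\mapsto$ (eigenvalue of $T_{\alpha}$ on the $i$-th joint eigenspace). Since $r\le n$ is finite, for each pair $i\ne j$ there is some $\alpha_{ij}\in A$ with $\chi_{i}(\alpha_{ij})\ne\chi_{j}(\alpha_{ij})$. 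I would then argue that a suitable $\mathbb{Q}$-linear combination $T_{0}:=\sum c_{ij}T_{\alpha_{ij}}$, with the rational coefficients $c_{ij}$ chosen generically (e.g. avoiding the finitely many rational hyperplanes on which two of the values $\sum c_{ij}\chi_{k}(\alpha_{ij})$ coincide), has the property that $\chi_{1}(\text{``}\alpha\text{''})$ evaluated at this combination — that is, the $r$ numbers $\lambda_{k}:=\sum_{ij}c_{ij}\chi_{k}(\alpha_{ij})$ for $k=1,\dots,r$ — are pairwise distinct. Such a choice exists because $\mathbb{Q}$ is infinite and we only need to avoid finitely many proper rational subvarieties; and $T_{0}\in\mathcal{T}_{\mathbb{Q}}$ by hypothesis.

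Now $T_{0}$ has exactly $r$ distinct eigenvalues $\lambda_{1},\dots,\lambda_{r}$, which are precisely the roots of $\text{char}_{0}(x)$; since $\text{char}_{0}(x)\in E[x]$ by assumption, each $\lambda_{k}$ is algebraic, and I set $E':=E(\lambda_{1},\dots,\lambda_{r})$, a number field. The point is that the eigenspace decomposition of $T_{0}$ refines (in fact equals, by the distinctness just arranged) the joint eigenspace decomposition: the $\lambda_{k}$-eigenspace of $T_{0}$ is exactly the joint eigenspace with character $\chi_{k}$. Hence for any $\alpha\in A$, the operator $T_{\alpha}$ acts on the $\lambda_{k}$-eigenspace of $T_{0}$ as the scalar $\chi_{k}(\alpha)$, so the eigenvalues of $T_{\alpha}$ are among $\{\chi_{1}(\alpha),\dots,\chi_{r}(\alpha)\}$. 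It remains to see these lie in $E'$. For this I would use that $\chi_{k}(\alpha)$ is an eigenvalue of $T_{\alpha}$ restricted to a specific subspace, hence a root of the characteristic polynomial of that restriction; alternatively, and more cleanly, I would note that $\text{char}_{\alpha}(x)=\prod_{k}(x-\chi_{k}(\alpha))^{d_{k}}$ where $d_{k}=\dim$ of the $k$-th joint eigenspace, and recover each $\chi_{k}(\alpha)$ from the known-in-$E$ coefficients of $\text{char}_{\alpha}$ together with the known correspondence between eigenspaces — but the slickest route is: consider the operator $c T_{0}+T_{\alpha}\in\mathcal{T}_{\mathbb{Q}}$ for $c\in\mathbb{Q}$; its eigenvalues are $c\lambda_{k}+\chi_{k}(\alpha)$, and $\text{char}_{cT_{0}+T_{\alpha}}(x)\in E[x]$, so each $c\lambda_{k}+\chi_{k}(\alpha)$ is algebraic over $E$; for $c$ large enough (avoiding finitely many rationals) these $r$ values are distinct, hence they are exactly the distinct roots of a polynomial in $E[x]$ and so lie in a fixed number field. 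Enlarging $E'$ once to absorb all of them — but since $\lambda_{k}\in E'$ already, $c\lambda_{k}+\chi_{k}(\alpha)\in E'$ forces $\chi_{k}(\alpha)\in E'$ — completes the argument.

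The main obstacle I anticipate is the uniformity in $\alpha$: a priori the field generated by a single $\chi_{k}(\alpha)$ could grow with $\alpha$, so one must genuinely use that the \emph{joint eigenspace structure is fixed} (finitely many eigenspaces, cut out once and for all by $T_{0}$) and that the characteristic polynomials of the \emph{whole family} have coefficients in the \emph{same} $E$. The $\mathbb{Q}$-linear-combination trick is what converts the pointwise algebraicity of coefficients into boundedness of the eigenvalues in a single number field $E'$; the genericity-of-rational-coefficients step should be checked carefully but is routine since one is only excluding finitely many proper conditions over the infinite field $\mathbb{Q}$.
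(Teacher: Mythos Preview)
Your overall strategy --- simultaneously diagonalise, build a $\mathbb{Q}$-combination $T_{0}\in\mathcal{T}_{\mathbb{Q}}$ whose eigenvalues $\lambda_{1},\dots,\lambda_{r}$ separate the joint eigenspaces, and set $E'=E(\lambda_{1},\dots,\lambda_{r})$ --- is close to the paper's, but the last paragraph has a genuine gap at exactly the uniformity point you flag. From $\mathrm{char}_{cT_{0}+T_{\alpha}}\in E[x]$ you only obtain that each $c\lambda_{k}+\chi_{k}(\alpha)$ is algebraic over $E$; you never establish that it lies in \emph{your} field $E'$. The sentence ``since $\lambda_{k}\in E'$ already, $c\lambda_{k}+\chi_{k}(\alpha)\in E'$ forces $\chi_{k}(\alpha)\in E'$'' is a correct implication with an unproved hypothesis, and ``enlarging $E'$ once to absorb all of them'' is precisely what must be justified uniformly in $\alpha$: nothing in your argument prevents the splitting field of $\mathrm{char}_{cT_{0}+T_{\alpha}}$ from wandering, as $\alpha$ varies, through the infinite set of extensions of $E$ of degree $\le n!$. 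The two alternative routes you sketch (restricting $T_{\alpha}$ to a single joint eigenspace, or ``recovering each $\chi_{k}(\alpha)$ from the coefficients of $\mathrm{char}_{\alpha}$'') have the same defect: the restriction has characteristic polynomial $(x-\chi_{k}(\alpha))^{d_{k}}$, which you do not know lies in $E[x]$, and the symmetric functions of the $\chi_{k}(\alpha)$ do not by themselves pin down the individual values over $E'$.

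The paper supplies the missing idea. Since every splitting field satisfies $[E_{\alpha}:E]\le n!$, one chooses $T_{\alpha_{0}}$ with $[E_{\alpha_{0}}:E]$ \emph{maximal} among all $\alpha$, and takes $E'=E_{\alpha_{0}}$. For an arbitrary $T_{\beta}$ (with eigenvalues $\mu_{k}$ in the common eigenbasis) one then picks $r\in\mathbb{Q}$ so that each $\lambda_{k}+r\mu_{k}$ is a \emph{primitive element} of $E(\lambda_{k},\mu_{k})$; only finitely many $r$ are excluded. The splitting field $L$ of $\mathrm{char}_{T_{\alpha_{0}}+rT_{\beta}}$ over $E$ then contains every $E(\lambda_{k},\mu_{k})$, hence contains $E_{\alpha_{0}}$; by maximality $[L:E]\le[E_{\alpha_{0}}:E]$, so $L=E_{\alpha_{0}}$ and in particular each $\mu_{k}\in E_{\alpha_{0}}$. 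Note that the paper does not need $T_{\alpha_{0}}$ to separate the joint eigenspaces, and conversely your $T_{0}$ has no reason to realise the maximal degree, which is why your endgame stalls. (If you want to salvage your own $E'$, one can: from $\mathrm{tr}\bigl((aT_{0}+bT_{\alpha})^{m}\bigr)\in E$ for all $a,b\in\mathbb{Q}$ and all $m$ one extracts $\sum_{k}d_{k}\lambda_{k}^{j}\chi_{k}(\alpha)\in E$ for $j=0,\dots,r-1$, and then inverting the Vandermonde matrix in the distinct $\lambda_{k}$ over $E'$ gives $\chi_{k}(\alpha)\in E'$. But that is a different argument from the one you wrote.)
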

\begin{proof} For every $\alpha\in A$, the degree of $\text{char}_{\alpha}(x)$ equals the dimension of $V$. Let $E_{\alpha}$ denote the splitting field of $T_{\alpha}$ over $E$, then $\left[E_{\alpha}:E\right]\leq n!$. Let $M$ denote the maximum of the set $$\{m\in \mathbb{N}|\text{ }m=\left[E_{\alpha}:E\right]\text{ for some }\alpha\in A\},$$
and let $T_{\alpha_{0}}\in \mathcal{T}_{\mathbb{Q}}$ be such that $\left[E_{\alpha_{0}}:E\right]=M$. By assumption, there exists a basis $\{v_{1},\dots,v_{n}\}$ of $V$ with respect to which the operators in $\mathcal{T}_{\mathbb{Q}}$ can be simultaneously diagonalised. Let $\lambda_{1},\cdots,\lambda_{n}$ be the eigenvalues of $T_{\alpha_{0}}$ corresponding to the eigenvectors $v_{1},\dots,v_{n}$. If $T_{\beta}\in\mathcal{T}_{\mathbb{Q}}$, let $\mu_{1},\dots,\mu_{n}$ be the eigenvalues of $T_{\beta}$ corresponding to the eigenvectors $v_{1},\dots,v_{n}$. For every $k\in\{1,\dots,n\}$, there are only finitely many elements $c\in E$ for which $\lambda_{k}+c\mu_{k}$ is not a primitive element of $E(\lambda_{k},\mu_{k})$. We can therefore find an element $r\in\mathbb{Q}$ such that $\lambda_{k}+r\mu_{k}$ is a primitive element of $E(\lambda_{k},\mu_{k})$ for every $k\in\{1,\dots,n\}$. Now, the operator $T_{\alpha_{0}}+rT_{\beta}$ is in $\mathcal{T}_{\mathbb{Q}}$ by assumption, its eigenvalues corresponding to the eigenvectors $v_{1},\dots,v_{n}$ are $\lambda_{1}+r\mu_{1},\dots,\lambda_{n}+r\mu_{n}$, and the splitting field over $E$ of its characteristic polynomial is $L=E(\lambda_{1}+r\mu_{1},\dots,\lambda_{n}+r\mu_{n})$. By construction, $L$ contains $E_{\alpha_{0}}$, hence $\left[L:E\right]$ is at least $M$. Moreover, $\left[L:E\right]$ is at most $M$, since $T_{\alpha_{0}}+rT_{\beta}$ is in $\mathcal{T}_{\mathbb{Q}}$. It follows that $L=E_{\alpha_{0}}$ and, since $E_{\beta}$ is contained in $L$ by construction, we have that $E_{\beta}$ is contained in $E_{\alpha_{0}}$. Since $T_{\beta}$ is an arbitrary element in $\mathcal{T}_{\mathbb{Q}}$, the result follows by setting $E'=E_{\alpha_{0}}$.

\end{proof}

\begin{cor}\label{Main} Let $G$ be a connected, semisimple, anisotropic algebraic group defined over $\mathbb{Q}$. Let $\pi_{0}=\pi_{0,\infty}\otimes \pi_{0}^{\infty}$ be an irreducible automorphic representation of $G(\mathbb{A}_{\mathbb{Q}})$. Assume the existence of a function $f\in C^{\infty}_{c}(G(\mathbb{R}))$ verifying the following conditions:
\begin{enumerate}
\item[(1)] There exists a finite family $\mathcal{C}$ of irreducible automorphic representations of $G(\mathbb{A}_{\mathbb{Q}})$ containing $\pi_{0}$ such that:
\begin{enumerate}
\item[(i)] If $\pi\in\mathcal{C}$, then $\pi^{\infty,K^{\infty}}\neq\{0\}$ and $\text{Tr}\pi_{\infty}(f)=1$.
\item[(ii)] If $\pi\notin \mathcal{C}$, then $\text{Tr}\pi_{\infty}(f)=0$.
\end{enumerate}
\item[(2')] There exists a number field $E$ such that, for every $\gamma\in \{G(\mathbb{Q})\}$, the Archimedean orbital integral $\mathcal{O}_{\gamma_{\infty}}(f)$ belongs to $E$. 
\end{enumerate}
Then there exists a number field $E'$ and a finite set of places, $S$, containing $\infty$ and all the places at which $\pi_{0}$ ramifies, such that for every $p\notin S$, and for every $h\in\mathcal{H}_{\mathbb{Q}}(G(\mathbb{Q}_{p}),K_{p})$, the eigenvalue of the operator $$\pi_{0,p}(h):\pi_{0,p}^{K_{p}}\longrightarrow \pi_{0,p}^{K_{p}}$$ is contained in $E'$; that is: the representation $\pi_{0}$ is $C$-arithmetic.
\end{cor}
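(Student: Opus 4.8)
The plan is to combine the algebraicity result of Proposition \ref{algtrace} with the linear-algebraic input of Proposition \ref{Gabor}. First I would observe that condition (2') is a strengthening of condition (2) in Proposition \ref{algtrace}: if all Archimedean orbital integrals lie in a fixed number field $E$, then in particular they are algebraic, so Proposition \ref{algtrace} applies and tells us that for every $h\in\mathcal{H}_{\mathbb{Q}}(G(\mathbb{A}^{\infty}_{\mathbb{Q}}),K^{\infty})$ the eigenvalues of $\pi_{0}^{\infty}(h)$ acting on the one-dimensional space $\pi_{0}^{\infty,K^{\infty}}$ are algebraic. Note that under our hypotheses $\pi_{0}^{\infty,K^{\infty}}\neq\{0\}$, so $\pi_{0}^{\infty}$ is unramified outside the finite set $S$ consisting of $\infty$ together with the primes at which $K_{p}\neq G(\mathbb{Z}_{p})$; for $p\notin S$ the space $\pi_{0,p}^{K_{p}}$ is one-dimensional and $h\in\mathcal{H}_{\mathbb{Q}}(G(\mathbb{Q}_{p}),K_{p})$ acts as a scalar, which is exactly the Hecke eigenvalue whose field of definition we must control.

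Next I would upgrade the conclusion of Proposition \ref{algtrace} from ``algebraic'' to ``lies in a fixed number field $E'$''. The key point is to re-examine the proof of Proposition \ref{algtrace} and extract, rather than mere algebraicity, membership in a controlled field. In that proof, the trace of $T^{m}=\bigoplus_{\pi\in\mathcal{C}}m(\pi)\pi^{\infty}(h^{*m})$ is shown to equal a finite sum of terms of the form (rational volume)$\times$(rational non-Archimedean orbital integral)$\times$(Archimedean orbital integral in $E$); hence $\mathrm{Tr}(T^{m})\in E$ for every $m$. By the Newton--Girard identities the coefficients of the characteristic polynomial of $T$ therefore lie in $E$ as well, for every $h\in\mathcal{H}_{\mathbb{Q}}(G(\mathbb{A}^{\infty}_{\mathbb{Q}}),K^{\infty})$. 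This is precisely the hypothesis of Proposition \ref{Gabor}.

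To invoke Proposition \ref{Gabor} I would take $V=\bigoplus_{\pi\in\mathcal{C}}m(\pi)\pi^{\infty,K^{\infty}}$, which is finite-dimensional, and let $\mathcal{T}_{\mathbb{Q}}=\{T(h):=\bigoplus_{\pi\in\mathcal{C}}m(\pi)\pi^{\infty}(h)\mid h\in\mathcal{H}_{\mathbb{Q}}(G(\mathbb{A}^{\infty}_{\mathbb{Q}}),K^{\infty})\}$. Since $\mathcal{H}_{\mathbb{Q}}(G(\mathbb{A}^{\infty}_{\mathbb{Q}}),K^{\infty})$ is a commutative $\mathbb{Q}$-algebra, this family is commuting and closed under finite $\mathbb{Q}$-linear combinations; each operator is diagonalisable because it acts on a space of $K^{\infty}$-invariant vectors in a unitary (hence semisimple) representation, where the Hecke algebra is $*$-closed and acts by normal operators — more concretely, the decomposition $V=\bigoplus_{\pi\in\mathcal{C}}m(\pi)\pi^{\infty,K^{\infty}}$ is a simultaneous eigenspace decomposition since each $\pi^{\infty,K^{\infty}}$ is one-dimensional and the Hecke algebra acts on it by a character. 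The previous paragraph verifies the hypothesis that all characteristic polynomials have coefficients in $E$. Proposition \ref{Gabor} then furnishes a single number field $E'$ containing all eigenvalues of all $T(h)$; restricting to $h$ supported at a single prime $p\notin S$ and using that $\pi_{0}\in\mathcal{C}$ shows that the Hecke eigenvalue of $\pi_{0,p}(h)$ lies in $E'$, which is the definition of $\pi_{0}$ being $C$-arithmetic.

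The main obstacle — or rather the main point requiring care — is the diagonalisability (equivalently, semisimplicity of the Hecke action) needed to apply Proposition \ref{Gabor}: one must argue that $\mathcal{H}_{\mathbb{C}}(G(\mathbb{A}^{\infty}_{\mathbb{Q}}),K^{\infty})$ acts diagonalisably on $V$. For anisotropic $G$ the discrete decomposition of $L^{2}([G])$ and the fact that each $\pi^{\infty,K^{\infty}}$ is one-dimensional make this essentially automatic, so this is a matter of exposition rather than a genuine difficulty; the substantive content is entirely imported from Propositions \ref{algtrace} and \ref{Gabor}. The only other thing to check is the mild bookkeeping that ``$S$ contains $\infty$ and the primes where $\pi_{0}$ ramifies'' can indeed be arranged, which follows from $\pi_{0}^{\infty,K^{\infty}}\neq\{0\}$ together with our choice $K_{p}=G(\mathbb{Z}_{p})$ at unramified $p$.
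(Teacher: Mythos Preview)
Your overall strategy matches the paper's exactly: upgrade Proposition \ref{algtrace} via condition (2') to conclude that every characteristic polynomial of $T(h)$ has coefficients in $E$, then feed a suitable family $\mathcal{T}_{\mathbb{Q}}$ of operators on $V=\bigoplus_{\pi\in\mathcal{C}}m(\pi)\pi^{\infty,K^{\infty}}$ into Proposition \ref{Gabor}.

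There is, however, a genuine gap in your choice of $\mathcal{T}_{\mathbb{Q}}$. You let $\mathcal{T}_{\mathbb{Q}}$ be the image of the \emph{full} adelic Hecke algebra $\mathcal{H}_{\mathbb{Q}}(G(\mathbb{A}^{\infty}_{\mathbb{Q}}),K^{\infty})$ and justify commutativity and diagonalisability by asserting that this algebra is commutative and that each $\pi^{\infty,K^{\infty}}$ is one-dimensional. Both assertions fail in general: at the finitely many primes where $K_{p}$ is not hyperspecial, the local Hecke algebra $\mathcal{H}_{\mathbb{Q}}(G(\mathbb{Q}_{p}),K_{p})$ need not be commutative, and $\pi_{p}^{K_{p}}$ can have dimension larger than one. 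Hence your $\mathcal{T}_{\mathbb{Q}}$ need not consist of commuting operators, and the hypothesis of Proposition \ref{Gabor} is not verified. (The normality argument is also incomplete: $\pi(h)^{*}=\pi(h^{*})$, but $h$ and $h^{*}$ need not commute in a non-commutative Hecke algebra.)

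The fix --- and this is exactly what the paper does --- is to shrink $\mathcal{T}_{\mathbb{Q}}$: take only operators attached to elements $h_{p}\otimes\bigotimes_{p'\neq p}\mathbf{1}_{K_{p'}}$ with $p\notin S$ and $h_{p}\in\mathcal{H}_{\mathbb{Q}}(G(\mathbb{Q}_{p}),K_{p})$, and then close under finite $\mathbb{Q}$-linear combinations. Each such $h_{p}$ lies in a commutative spherical Hecke algebra and acts as a scalar on the one-dimensional $\pi_{p}^{K_{p}}$, so the associated $T(h)$ acts as a scalar on every block $\pi^{\infty,K^{\infty}}$ regardless of that block's dimension; these operators are therefore automatically simultaneously diagonalisable and pairwise commuting. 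Your trace-formula and Newton--Girard argument then shows their characteristic polynomials have coefficients in $E$, Proposition \ref{Gabor} applies, and the rest of your deduction goes through unchanged.
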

\begin{proof} There exists a fnite set of places, $S$, which contains $\infty$ and such that, for $p\notin S$, the representation $\pi_{p}$ is unramified for every $\pi\in\mathcal{C}$. \\  For $p\notin S$, and for each element $h\in\mathcal{H}_{\mathbb{Q}}(G(\mathbb{Q}_{p}),K_{p})$, we can form an element in $\mathcal{H}_{\mathbb{Q}}(G(\mathbb{A}_{\mathbb{Q}}^{\infty}),K^{\infty})$, called again $h$ abusing notation, defined by tensoring $h$ with the unit of $\mathcal{H}_{\mathbb{Q}}(G(\mathbb{Q}_{p'}),K_{p'})$ for every $p'\neq p$. We thus obtain, for every $p\notin S$, and for every $\pi\in\mathcal{C}$, a family of commuting diagonalisable operators $$\{\pi^{\infty}(h):\pi^{\infty,K^{\infty}}\longrightarrow \pi^{\infty,K^{\infty}}\}_{h}$$ indexed by $\mathcal{H}_{\mathbb{Q}}(G(\mathbb{Q}_{p}),K_{p})$. For every $p\notin S$, this gives rise to a family of commuting diagonalisable operators $$\mathcal{T}_{\mathbb{Q},p}:=\{T_{h}:=\bigoplus_{\pi\in\mathcal{C}}m(\pi)\pi^{\infty,K^{\infty}}(h):\bigoplus_{\pi\in\mathcal{C}}m(\pi)\pi^{\infty,K^{\infty}}\longrightarrow \bigoplus_{\pi\in\mathcal{C}}m(\pi)\pi^{\infty,K^{\infty}}\}_{h}$$ indexed by $\mathcal{H}_{\mathbb{Q}}(G(\mathbb{Q}_{p}),K_{p})$. 
Arguing as in the proof of Proposition \ref{algtrace}, but using the stronger condition (2'), it follows that, for every $p\notin S$, the characteristic polynomial of every $T_{h}\in\mathcal{T}_{\mathbb{Q},p}$ has coefficients in $E$. 
We observe that elements belonging to different $\mathcal{T}_{\mathbb{Q},p}$'s commute. Therefore, we form the $\mathbb{Q}$-subspace $\mathcal{T_{\mathbb{Q}}}$ of the endomorphism space of $$\bigoplus_{\pi\in \mathcal{C}}\pi^{\infty,K^{\infty}}$$ generated by the families $\mathcal{T}_{\mathbb{Q},p}$. It consists of commuting diagonalisable operators. Let $T\in \mathcal{T}_{\mathbb{Q}}$ and write it as $$T=\sum_{i=1}^{r}a_{i}T_{h_{i}}$$ for some $a_{1},\dots,a_{r}\in \mathbb{Q}$ and some $h_{1},\dots,h_{r}\in \bigcup_{p\notin S}\mathcal{T}_{\mathbb{Q},p}$. We observe that $T$ is equal to the operator $$\bigoplus_{\pi\in\mathcal{C}}m(\pi)\pi^{\infty,K^{\infty}}(\tilde{h}):\bigoplus_{\pi\in\mathcal{C}}m(\pi)\pi^{\infty,K^{\infty}}\longrightarrow \bigoplus_{\pi\in\mathcal{C}}m(\pi)\pi^{\infty,K^{\infty}},$$ where $$\tilde{h}:=\sum_{i=1}^{r}a_{i}h_{i}\in\mathcal{H}_{\mathbb{Q}}(G(\mathbb{A}_{\mathbb{Q}}^{\infty}),K^{\infty}).$$ 
Arguing as in Proposition \ref{algtrace}, and by condition (2'), we conclude that the characteristic polynomial of $T$ has coefficients in $E$.
We can thus apply Proposition \ref{Gabor} to $\mathcal{T}_{\mathbb{Q}}$ and the result follows since, for every $p\notin S$ and for every $h\in\mathcal{H}_{\mathbb{Q}}(G(\mathbb{Q}_{p}),K_{p})$, the eigenvalue of the operator $\pi_{0,p}(h)$ is an eigenvalue of the operator $T_{h}\in\mathcal{T}_{\mathbb{Q},p}\subset \mathcal{T}_{\mathbb{Q}}$.      

\end{proof}

To conclude, we apply Corollary \ref{Main} to give a new proof that an automorphic representation with a regular discrete series as Archimedean component is \\ $C$-arithmetic.
These have been studied in \cite{A81}.\\

We equip the centraliser groups $G_{\gamma}(\mathbb{A}_{\mathbb{Q}})$ with Gross' measure , so that (M1) is satisfied. For every non-Archimedean $p$ at which $G(\mathbb{Q}_{p})$ is unramified, we choose local measures $\mu_{p}$ on $G({\mathbb{Q}_{p}})$ such that $\mu_{p}(G(\mathbb{Z}_{p}))=1$ and, for the remaining places, we normalise the Haar measures so that they satisfy (M2) in the way explained above.

\begin{cor}\label{Application} Let $G$ be a connected, semisimple, anisotropic algebraic group defined over $\mathbb{Q}$. Let $\pi_{0}=\pi_{0,\infty}\otimes \pi_{0}^{\infty}$ be an irreducible automorphic representation of $G(\mathbb{A}_{\mathbb{Q}})$ with $\pi_{0,\infty}$ a regular discrete series and $\pi^{\infty,K^{\infty}}_{0}\neq \{0\}$. Then $\pi_{0}$ is $C$-arithmetic.
\end{cor}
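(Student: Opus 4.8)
The plan is to invoke Corollary \ref{Main}, so the task reduces to producing a single test function $f\in C^{\infty}_{c}(G(\mathbb{R}))$ satisfying conditions (1) and (2') for $\pi_{0}$ when $\pi_{0,\infty}$ is a regular discrete series. The natural candidate is a \emph{pseudo-coefficient} (or \emph{Euler--Poincar\'e function}) for the discrete series $\pi_{0,\infty}$, in the sense of Clozel--Delorme and Arthur \cite{A81}: a function $f_{\pi_{0,\infty}}\in C^{\infty}_{c}(G(\mathbb{R}))$ such that $\mathrm{Tr}\,\sigma(f_{\pi_{0,\infty}})=1$ if $\sigma\cong\pi_{0,\infty}$ and $\mathrm{Tr}\,\sigma(f_{\pi_{0,\infty}})=0$ for all other irreducible tempered $\sigma$. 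Since $G$ is anisotropic, $[G]$ is compact and every automorphic representation is discrete (indeed, every $\pi_\infty$ occurring is unitary, hence its character pairs against $f_{\pi_{0,\infty}}$ in the expected way); in particular, a regular discrete series is isolated among the unitary dual in a suitable sense, so the set $\mathcal{C}$ of automorphic $\pi$ with $\mathrm{Tr}\,\pi_\infty(f_{\pi_{0,\infty}})\neq 0$ is exactly those with $\pi_\infty\cong\pi_{0,\infty}$. This set is finite because, after fixing $K^\infty$ so that $\pi_0^{\infty,K^\infty}\neq\{0\}$, the multiplicity formula (together with the compactness of $[G]$) bounds the number of such $\pi$; replacing $f_{\pi_{0,\infty}}$ by a convolution with a suitable element of the Hecke algebra at finitely many bad places, or simply restricting attention to $\pi$ with $\pi^{\infty,K^\infty}\neq\{0\}$, one arranges (1)(i): every $\pi\in\mathcal{C}$ has $\pi^{\infty,K^\infty}\neq\{0\}$ and $\mathrm{Tr}\,\pi_\infty(f)=1$. (Strictly, one should check that the non-tempered unitary representations with the same infinitesimal character do not contribute; for \emph{regular} infinitesimal character and $G$ anisotropic this follows from Vogan--Zuckerman / the classification, as the relevant cohomological representations are then tempered.)

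For condition (2'), I would use the explicit description of pseudo-coefficients in terms of Euler--Poincar\'e functions. By the stabilization-free identity of Arthur (or by Clozel--Delorme), for a regular discrete series the average of the pseudo-coefficients over the $L$-packet is, up to sign, an Euler--Poincar\'e function $f_{\mathrm{EP}}$, and its orbital integrals satisfy: $\mathcal{O}_{\gamma_\infty}(f_{\mathrm{EP}})=0$ unless $\gamma_\infty$ lies in a compact (elliptic) torus, in which case $\mathcal{O}_{\gamma_\infty}(f_{\mathrm{EP}})$ equals an explicit expression built from the Euler characteristic of $G_{\gamma_\infty}(\mathbb{R})$-locally-symmetric data and the values of the character of the finite-dimensional representation with the relevant highest weight at $\gamma_\infty$. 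Concretely, these orbital integrals are, up to the volume normalization, \emph{algebraic numbers} — in fact they lie in the field generated by the values of a Weyl-character quotient at roots of unity (since $\gamma_\infty$ semisimple elliptic of finite order when $\gamma\in G(\mathbb{Q})$, as $G(\mathbb{Q})\backslash G(\mathbb{A})$ is compact forces torsion). One then checks this field is a fixed number field $E$ independent of $\gamma$: there are only finitely many $G(\mathbb{R})$-conjugacy classes of elliptic elements of $G(\mathbb{Q})$ meeting the support considerations, and the values of the pertinent characters lie in a single cyclotomic field determined by the exponents of these torsion elements. This yields (2').

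With (1) and (2') in hand, Corollary \ref{Main} applies verbatim and gives that $\pi_0$ is $C$-arithmetic. The main obstacle is the precise bookkeeping in step (2'): one must pin down the pseudo-coefficient (which is not unique) so that its \emph{Archimedean} orbital integrals are genuinely in a number field rather than merely algebraic with unbounded degree, and one must confirm that passing from the pseudo-coefficient of $\pi_{0,\infty}$ to the full $L$-packet average (an Euler--Poincar\'e function, whose orbital integrals are the ones with clean formulas) does not disturb condition (1) — this is fine because all members of a discrete series $L$-packet have the same infinitesimal character and, for anisotropic $G$, the packet is finite and each member is caught with the same trace. A secondary point is justifying finiteness of $\mathcal{C}$ cleanly; this is where the compactness of $[G]$ and Lemma 9.1 of \cite{A86} (already cited in the proof of Proposition \ref{algtrace}) do the work. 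I do not expect any difficulty in the spectral side, since for anisotropic $G$ there is no continuous spectrum and the trace formula is the simple one already written above.
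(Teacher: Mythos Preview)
Your strategy matches the paper's: verify conditions (1) and (2') of Corollary~\ref{Main} using the Euler--Poincar\'e function $f_\mu$ attached to the finite-dimensional representation $\mu$ whose infinitesimal character is contragredient to that of $\pi_{0,\infty}$, then invoke Corollary~\ref{Main}. The treatment of (1) is essentially correct, though the paper works directly with $f_\mu$ (the packet-level Euler--Poincar\'e function from Clozel--Delorme \cite{CD}) rather than toggling between a single pseudo-coefficient and the packet average as you do; this avoids the hedging about non-tempered contributions, since the trace of $f_\mu$ on any unitary $\sigma$ is literally an Euler characteristic of $(\mathfrak{g},K_\infty)$-cohomology and one can quote Arthur (Corollary~6.2 of \cite{A81}) directly.

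There is, however, a genuine gap in your verification of (2'). You claim that an elliptic $\gamma_\infty$ with $\gamma\in G(\mathbb{Q})$ must be of finite order because $[G]$ is compact, and hence that the orbital integrals take values in a cyclotomic field. This is false: compactness of $[G]$ forces semisimplicity of rational points but not finite order. For example, in the norm-one group of the rational Hamilton quaternions (definite, hence anisotropic with $G(\mathbb{R})\cong SU(2)$), the element $\tfrac{3}{5}+\tfrac{4}{5}i$ has reduced norm $1$ and reduced trace $\tfrac{6}{5}$; its eigenvalues $\tfrac{3}{5}\pm\tfrac{4}{5}\sqrt{-1}$ lie on the unit circle but are not roots of unity. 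Your cyclotomic bound, and with it the argument producing a single $E$, collapses.

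The paper's argument for (2') avoids torsion entirely and is much simpler. Since $G$ is defined over $\mathbb{Q}$, the algebraic representation $\mu$ has a model over a fixed number field $E$. By the orbital-integral formula of Clozel--Labesse (Theorem~4.1 in \cite{CL}), $\mathcal{O}_{\gamma_\infty}(f_\mu)$ either vanishes or equals $d_{\gamma_\infty}\cdot\mathrm{Tr}\,\mu(\gamma_\infty)$ with $d_{\gamma_\infty}\in\mathbb{Z}$. Because $\gamma\in G(\mathbb{Q})\subset G(E)$ and $\mu$ is defined over $E$, the trace $\mathrm{Tr}\,\mu(\gamma)$ lies in $E$ automatically, uniformly in $\gamma$; no finiteness of conjugacy classes is needed. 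This is the missing idea in your step~(2').
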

\begin{proof}  By Th\'eor\`eme 3 in \cite{CD}, if $\mu$ is an irreducible, finite-dimensional representation of $G(\mathbb{R})$, there exists a smooth compactly supported function $f_{\mu}$ such that, for every admissible $(\mathfrak{g},K_{\infty})$-module of finite length $\sigma$, we have $$\text{Tr}\sigma(f_{\mu})=\sum^{\infty}_{i=0}(-1)^{i}\text{dim}\text{H}^{i}(\mathfrak{g},K_{\infty},\sigma\otimes \mu).$$
Now, let $\pi_{0}$ be as in the statement, let $\mu$ be an irreducible, finite-dimensional, algebraic representation of $G(\mathbb{R})$ the infinitesimal character of which is the contragredient of the infinitesimal character of $\pi_{0,\infty}$. Then the $(\mathfrak{g},K_{\infty})$-cohomology of $\pi_{0,\infty}$ with respect to $\mu$ is concentrated in one degree by part (b) of Theorem 5.3 in \cite{BorelWallach} and it is $1$-dimensional: we can thus normalise $f_{\mu}$ so that $\text{Tr}\pi_{0,\infty}(f_{\mu})=1$. This will then hold for every discrete series with the same infinitesimal character as $\pi_{0,\infty}$. By the proof of Corollary 6.2 in \cite{A81}, under the regularity assumption on $\pi_{0,\infty}$, the unitary irreducible representations of $G(\mathbb{R})$ with non-vanishing $(\mathfrak{g},K_{\infty})$-cohomology with respect to $\mu$ are precisely the discrete series with the same infinitesimal character as $\pi_{0,\infty}$. It is well-known that there are only finitely many such discrete series representations, therefore (1) of Corollary \ref{Main} is fulfilled. For (2'), we observe that since $G$ is defined over $\mathbb{Q}$, the representation $\mu$ admits a model over a number field $E$. Let $\gamma\in \{G(\mathbb{Q})\}$. By Theorem 4.1 in \cite{CL}, taking into account our normalisation of $f_{\mu}$ and the choice of Haar measure, the orbital integral $\mathcal{O}_{\gamma_{\infty}}(f_{\mu})$ vanishes or we have $$\mathcal{O}_{\gamma_{\infty}}(f_{\mu})=d_{\gamma_{\infty}}\text{Tr}\mu(\gamma_{\infty})$$ where $d_{\gamma_{\infty}}\in \mathbb{Z}\backslash \{0\}$. 
Since $\mu$ is defined over $E$, the orbital integral $\mathcal{O}_{\gamma}(f_{\mu})$ belongs to $E$. We can now apply Corollary \ref{Main}, concluding the proof.

\end{proof}

\newpage

\section{Reduction to Cuspidal Representations}

We recall that by the discussion in Section 5.1 of \cite{BG}, given a connected, reductive algebraic group $G$ defined over a number field $F$, and a central extension $$1\longrightarrow \mathbb{G}_{m}\longrightarrow G'\longrightarrow G\longrightarrow 1,$$ since the map $G'(\mathbb{A}_{F})\longrightarrow G(\mathbb{A}_{F})$ is surjective, we can identify the irreducible automorphic representations of $G(\mathbb{A}_{F})$ with the irreducible automorphic representations of $G'(\mathbb{A}_{F})$ which are trivial on the image of $\mathbb{G}_{m}(\mathbb{A}_{F})$ in $G'(\mathbb{A}_{F})$. We would like to thank T. Gee for explaining that this identification follows from the results in Section 5 of \cite{LS}.

\begin{prop}\label{liftcusp} Let $G$ be a connected, reductive algebraic group defined over a number field $F$ and let $$1\longrightarrow \mathbb{G}_{m}\longrightarrow G'\longrightarrow G\longrightarrow 1$$ be a central extension. Let $\pi$ be an irreducible automorphic representation of $G(\mathbb{A}_{F})$ and let $\pi'$ denote the irreducible automorphic representation of $G'(\mathbb{A}_{F})$ obtained by lifting $\pi$ along the surjection $G'(\mathbb{A}_{F})\twoheadrightarrow G(\mathbb{A}_{F})$. Then the following two statements hold:
\begin{enumerate} 
\item[(1)] The representation $\pi$ is $C$-algebraic (resp. $C$-arithmetic, resp. $L$-algebraic, resp. $L$-arithmetic) if and only if the representation $\pi'$ is $C$-algebraic (resp. $C$-arithmetic, resp. $L$-algebraic, resp. $L$-arithmetic).
\item[(2)] If $\pi$ is cuspidal, then $\pi'$ is cuspidal.
\end{enumerate}
\end{prop}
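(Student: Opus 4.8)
My plan is to reduce everything to two structural facts about a central $\mathbb{G}_m$-extension. First, because $\mathbb{G}_m$ is central it lies in every Borel subgroup of $G'$, so I would argue that $P'\mapsto P'/\mathbb{G}_m$ is an inclusion-preserving bijection between the parabolic $F$-subgroups of $G'$ and those of $G$, preserving properness, and that the unipotent radical $N'$ of $P'$ maps isomorphically onto the unipotent radical of $P'/\mathbb{G}_m$ (a central torus meets a unipotent subgroup trivially), so that $N'(F)\cong N(F)$ and $N'(\mathbb{A}_F)\cong N(\mathbb{A}_F)$ compatibly with Haar measures. Second, on points: $G'(F)\to G(F)$ is surjective by Hilbert 90, $G'(F_v)\to G(F_v)$ is surjective at every $v$, hence $G'(\mathbb{A}_F)\to G(\mathbb{A}_F)$ is surjective with kernel $\mathbb{A}_F^{\times}$; so $p\colon G'(F)\backslash G'(\mathbb{A}_F)\to G(F)\backslash G(\mathbb{A}_F)$ is surjective, and pullback along $p$ identifies functions on the target with the $\mathbb{G}_m(\mathbb{A}_F)$-invariant functions on the source. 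I will also use that the roots, the Weyl group, and (at finite places) the relative Weyl group of $G'$ agree with those of $G$.

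\textbf{Part (1).} For an Archimedean $v$ I would choose $T'_v\subseteq B'_v$ over $T_v\subseteq B_v$; from $1\to\mathbb{G}_m\to T'_v\to T_v\to 1$ I get $0\to X^{*}(T_v)\to X^{*}(T'_v)\to\mathbb{Z}\to 0$, the last map being restriction to $\mathbb{G}_m$, and $\rho_{B'_v}$ is the image of $\rho_{B_v}$. Since $\pi'_v$ is trivial on $\mathbb{G}_m(F_v)$, its infinitesimal character is the image of $\lambda_{\pi_v}$; and any element of $X^{*}(T'_v)$ whose differential kills $\mathrm{Lie}(\mathbb{G}_m)$ is trivial on $\mathbb{G}_m$, hence lies in $X^{*}(T_v)$. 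This immediately gives $\lambda_{\pi'_v}-\rho_{B'_v}\in X^{*}(T'_v)$ (resp.\ $\lambda_{\pi'_v}\in X^{*}(T'_v)$) iff $\lambda_{\pi_v}-\rho_{B_v}\in X^{*}(T_v)$ (resp.\ $\lambda_{\pi_v}\in X^{*}(T_v)$), settling the $C$- and $L$-algebraic cases. At a non-Archimedean $v$ where $G(F_v)$ is unramified, $G'(F_v)$ is unramified, $\mathbb{G}_m(\mathcal{O}_v)\subseteq K'_v$, and I would use that fibre integration along $p$ is a surjective $\mathbb{Q}$-algebra map $\mathcal{H}_{\mathbb{Q}}(G'(F_v),K'_v)\to\mathcal{H}_{\mathbb{Q}}(G(F_v),K_v)$ through which $\pi'_v$ acts on $(\pi'_v)^{K'_v}=(\pi_v)^{K_v}$; equivalently $\mathcal{H}_{\mathbb{Q}}(G'(F_v),K'_v)\cong\mathcal{H}_{\mathbb{Q}}(G(F_v),K_v)[t^{\pm1}]$ with $t$ the characteristic function of a coset of a central uniformiser from $\mathbb{G}_m(F_v)$, on which $\pi'_v$ acts by the rational number $1$. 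So the Hecke characters of $\pi'_v$ and $\pi_v$ have the same image in $\mathbb{C}$; the identical computation on the Satake side ($X_{*}(T'_{v,d})\cong X_{*}(T_{v,d})\oplus\mathbb{Z}$, same relative Weyl group) handles $L$-arithmeticity. Hence $\pi'$ is $C$-arithmetic (resp.\ $L$-arithmetic) iff $\pi$ is.

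\textbf{Part (2).} I would first check that pullback along $p$ gives a $G'(\mathbb{A}_F)$-equivariant injection of automorphic forms on $G$ into automorphic forms on $G'$: for $\varphi$ automorphic on $G$, the function $\varphi\circ p$ is $K'$-finite, of moderate growth, $\mathbb{G}_m(\mathbb{A}_F)$-invariant with the induced central character, and $\mathcal{Z}(\mathfrak{g}')$-finite because $\mathcal{Z}(\mathfrak{g}')$ acts on it through its image in $\mathcal{Z}(\mathfrak{g})$; the image of this map is precisely the $\mathbb{G}_m(\mathbb{A}_F)$-invariant automorphic forms, and it realises $\pi'$ on the vectors of $\pi$. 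Then, assuming $\pi$ is cuspidal, for $\varphi\in\pi$ a cusp form and a proper parabolic $F$-subgroup $P'=M'N'$ of $G'$ with $P=P'/\mathbb{G}_m=MN$, I would compute, using $N'\xrightarrow{\sim}N$ and $p(n'g')=p(n')p(g')$,
\[
\begin{aligned}
(\varphi\circ p)_{P'}(g')
&=\int_{N'(F)\backslash N'(\mathbb{A}_F)}\varphi(p(n'g'))\,dn'\\
&=\int_{N(F)\backslash N(\mathbb{A}_F)}\varphi(n\,p(g'))\,dn=\varphi_{P}(p(g'))=0,
\end{aligned}
\]
since $\varphi$ is a cusp form and every proper parabolic $F$-subgroup of $G'$ arises this way. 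Thus $\varphi\circ p$ is a cusp form on $G'$, so $\pi'$, being generated by such forms, is cuspidal.

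\textbf{Where the difficulty lies.} The group theory here is formal; the point needing genuine care is the bookkeeping of central characters and, with it, the correct formulation of ``cuspidal automorphic representation'' for $G'$. Since a nonzero $\mathbb{G}_m(\mathbb{A}_F)$-invariant function on $G'(F)\backslash G'(\mathbb{A}_F)$ is never square-integrable, one has to work with the space of automorphic \emph{forms} with a fixed unitary central character (or modulo the maximal split central torus), as in \cite{BG} and \cite{GH}, and verify that the pulled-back objects meet all the axioms of that space. Once that is in place, part (1) follows from the two exact sequences of character groups and part (2) from the displayed constant-term identity.
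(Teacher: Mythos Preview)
Your argument is correct. The paper's own proof of this proposition is purely by citation: part~(1) is referred to \cite{BG}, Lemma~5.33, and part~(2) to \cite{LS}, Theorem~5.2.1. You instead give the underlying arguments directly, and what you write is essentially what those references do in this special case. For~(1) you exploit the exact sequence $0\to X^{*}(T_v)\to X^{*}(T'_v)\to\mathbb{Z}\to 0$ together with the fact that $\pi'$ is trivial on the central $\mathbb{G}_m$, and at finite places you compare the two spherical Hecke algebras (respectively Satake $\mathbb{Q}$-structures) via fibre integration; for~(2) you use the bijection $P'\mapsto P'/\mathbb{G}_m$ on parabolics, the isomorphism $N'\xrightarrow{\sim}N$ of unipotent radicals, and a direct constant-term computation. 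The citation to \cite{LS} buys generality (they treat arbitrary central morphisms, not only $\mathbb{G}_m$-extensions), whereas your approach is self-contained for the case actually needed here. One small point: your assertion that $G'(F_v)$ is unramified at every $v$ where $G(F_v)$ is deserves a sentence of justification; alternatively, and more simply, just enlarge the finite set $S$ so that both $G$ and $G'$ are unramified outside it, which is all the definitions of $C$- and $L$-arithmetic require. Your final paragraph correctly identifies the only genuinely delicate issue, namely working with automorphic forms with fixed central character rather than with $L^2([G'])$ directly.
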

\begin{proof} See \cite{BG}, Lemma 5.33, for part (1). Part (2) is \cite{LS}, Theorem 5.2.1. 

\end{proof}

For certain groups, the $C$ and $L$ notions that we are considering are related by a character twist. More precisely:

\begin{thm}\label{twistchi} Let $G$ be a connected, reductive algebraic group defined over a number field $F$. Assume that $G$ admits a twisting element in the sense of Definition 5.34 of \cite{BG}. Then there exists a character $\chi$ of $G(F)\backslash G(\mathbb{A}_{F})$ such that the following statements hold:
\begin{enumerate}  
\item[(1)] An irreducible automorphic representation $\pi$ of $G(\mathbb{A}_{F})$ is $C$-algebraic if and only if $\pi\otimes \chi$ is $L$-algebraic.
\item[(2)] An irreducible automorphic representation $\pi$ of $G(\mathbb{A}_{F})$ is $C$-arithmetic if and only if $\pi\otimes \chi$ is $L$-arithmetic.
\end{enumerate}
\end{thm}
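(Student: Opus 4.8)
The plan is to produce $\chi$ explicitly from the twisting element $\theta$ and then check the two equivalences place by place: the archimedean places give part (1), the non-archimedean ones give part (2). A twisting element $\theta\in X^{*}(T)$ pairs to $1$ with every simple coroot, exactly as the half-sum of positive roots $\rho$ does, so $\theta-\rho$ pairs to $0$ with every coroot; hence $\theta-\rho$ is $W$-invariant, and it is fixed by the Galois action on the based root datum since $\theta$ and $\rho$ both are. Therefore $2(\theta-\rho)\in X^{*}(T)$ is $W$- and Galois-invariant, so --- after multiplying by a suitable $N\ge 1$ to make it liftable --- $2N(\theta-\rho)$ is the restriction to $T$ of a character $\eta\in X^{*}(G)$ defined over $F$, and I would set $\chi:=|\eta|_{\mathbb{A}_{F}}^{-1/(2N)}$, the continuous character $g\mapsto\prod_{v}|\eta_{v}(g_{v})|_{v}^{-1/(2N)}$ of $G(\mathbb{A}_{F})$ valued in $\mathbb{R}_{>0}$, which is trivial on $G(F)$ by the product formula. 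Its two relevant local features: (a) at each archimedean $v$ the infinitesimal character of $\chi_{v}$ is the weight $\rho-\theta\in X^{*}(T_{v})_{\mathbb{C}}$; (b) at each non-archimedean $v$ at which $G$ is unramified, $\chi_{v}$ is unramified with Satake parameter the $W$-fixed point ``$q_{v}^{\theta-\rho}$'', i.e.\ the $\mathbb{Q}$-algebra automorphism $\Theta$ of $\mathbb{C}[X_{*}(T_{v,d})]^{W_{v,d}}$ sending the monomial symmetric function $m_{\lambda}$ of a dominant $\lambda$ to $q_{v}^{\langle\theta-\rho,\lambda\rangle}m_{\lambda}$ --- well defined precisely because $\theta-\rho$ is $W$-invariant, with exponents in $\tfrac12\mathbb{Z}$ because $2(\theta-\rho)$ is integral. (Signs and the exact power I would pin down on $G=\mathrm{GL}_{n}$, where $\theta=(n-1,\dots,1,0)$ and $\chi=|\det|^{-(n-1)/2}$; replacing $\chi$ by $|\eta|$ to a different rational power alters nothing below.)

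Part (1) is then immediate: twisting an irreducible admissible representation by a character shifts its infinitesimal character by that character's weight, so at each archimedean $v$ one has $\lambda_{(\pi\otimes\chi)_{v}}=\lambda_{\pi_{v}}+(\rho-\theta)$; since $\theta\in X^{*}(T_{v})$ and $2\rho\in X^{*}(T_{v})$, the conditions $\lambda_{\pi_{v}}-\rho\in X^{*}(T_{v})$ and $\lambda_{(\pi\otimes\chi)_{v}}\in X^{*}(T_{v})$ are equivalent, and running over all archimedean $v$ gives part (1). This is the algebraic half of the statement, and is essentially contained in \cite{BG}.

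For part (2) I would reduce to a purely local statement at each non-archimedean $v$ at which $\pi_{v}$, $\chi_{v}$ and $G$ are unramified: writing $s$ for the Satake parameter of $\pi_{v}$, $\pi_{v}$ is defined over $E$ (the image of the character $\mathcal{H}_{\mathbb{Q}}(G(F_{v}),K_{v})\to\mathbb{C}$ giving the Hecke eigenvalues on $\pi_{v}^{K_{v}}$ lies in $E$) if and only if the Satake parameter of $(\pi\otimes\chi)_{v}$ is defined over $E$ (the image of $\mathbb{Q}[X_{*}(T_{v,d})]^{W_{v,d}}\to\mathbb{C}$ given by evaluation at it lies in $E$). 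The crux is the local fact that, under the normalised Satake isomorphism $\mathcal{S}$, the rational structure $\mathcal{S}(\mathcal{H}_{\mathbb{Q}}(G(F_{v}),K_{v}))$ of $\mathbb{C}[X_{*}(T_{v,d})]^{W_{v,d}}$ equals $\Theta^{-1}(\mathbb{Q}[X_{*}(T_{v,d})]^{W_{v,d}})$ for the automorphism $\Theta$ of (b). This follows from the standard explicit formula for $\mathcal{S}(\mathbf{1}_{K_{v}\lambda(\varpi_{v})K_{v}})$ --- leading term $q_{v}^{\langle\rho,\lambda\rangle}m_{\lambda}$, lower terms supported on dominant $\mu<\lambda$ with coefficients in $q_{v}^{\langle\rho,\lambda\rangle}\mathbb{Z}[q_{v}^{-1}]$ --- together with two observations: $\langle\rho,\lambda\rangle-\langle\rho-\theta,\lambda\rangle=\langle\theta,\lambda\rangle\in\mathbb{Z}$, and $\langle\rho-\theta,\lambda\rangle=\langle\rho-\theta,\mu\rangle$ whenever $\mu\le\lambda$ (since $\lambda-\mu$ is a non-negative sum of coroots, each paired to $0$ with $\rho-\theta$). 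These make the transition between $\{\mathcal{S}(\mathbf{1}_{K_{v}\lambda(\varpi_{v})K_{v}})\}_{\lambda}$ and $\{q_{v}^{\langle\rho-\theta,\lambda\rangle}m_{\lambda}\}_{\lambda}=\{\Theta^{-1}(m_{\lambda})\}_{\lambda}$ unitriangular with rational entries (once the prime power $q_{v}$ is substituted), giving the claimed identity. Granting it, the image of $\mathcal{H}_{\mathbb{Q}}(G(F_{v}),K_{v})\to\mathbb{C}$ coincides with the image of $\mathbb{Q}[X_{*}(T_{v,d})]^{W_{v,d}}\to\mathbb{C}$ under evaluation at the Satake parameter of $(\pi\otimes\chi)_{v}$: $\Theta^{-1}$ identifies the first rational structure with $\mathbb{Q}[X_{*}(T_{v,d})]^{W_{v,d}}$, while twisting $\pi_{v}$ by $\chi_{v}$ multiplies $s$ by $q_{v}^{\theta-\rho}$ --- the two operations being inverse up to translation by the rational point $q_{v}^{2(\theta-\rho)}$, which permutes $\mathbb{Q}[X_{*}(T_{v,d})]^{W_{v,d}}$ and hence leaves the image unchanged. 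This is the local equivalence, with the \emph{same} $E$. To globalise, take $S$ to contain the archimedean places, the places where $\pi$ ramifies, and the finitely many finite places at which $G$ or $\eta$ is ramified; then $\pi$ is $C$-arithmetic (with field $E$, set $S$) iff $\pi_{v}$ is defined over $E$ for all $v\notin S$ iff the Satake parameter of $(\pi\otimes\chi)_{v}$ is defined over $E$ for all $v\notin S$ iff $\pi\otimes\chi$ is $L$-arithmetic.

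The one genuinely substantive step is the local Satake comparison in part (2): that $\Theta$ --- translation by $q_{v}^{\theta-\rho}$ --- interchanges the two rational structures on $\mathcal{H}_{\mathbb{C}}(G(F_{v}),K_{v})$. This is exactly the phenomenon that separates the $C$- and $L$-forms of Conjecture \ref{BGconj}, and the whole purpose of the twisting element is to furnish an integral, $W$-invariant surrogate for $2\rho$, which is what makes $q_{v}^{\theta-\rho}$ a legitimate $W$-fixed point of the dual torus (modulo rational points). Everything else --- existence and $G(F)$-triviality of $\chi$, the archimedean weight computation, and the bookkeeping with $S$ --- is routine, and part (1) is already available in \cite{BG}.
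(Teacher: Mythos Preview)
Your proposal is correct and follows exactly the route the paper takes: the paper's own proof is nothing more than a citation to Propositions~5.35 and~5.36 of \cite{BG}, and what you have written is precisely a sketch of those arguments --- the construction of $\chi$ from the twisting element $\theta$, the archimedean infinitesimal-character shift $\lambda\mapsto\lambda+(\rho-\theta)$ for part~(1), and the comparison of the two rational structures on the spherical Hecke algebra via the twist by $q_{v}^{\theta-\rho}$ for part~(2). There is nothing to correct or add.
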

\begin{proof} See Proposition 5.35 in \cite{BG} for part (1) and Proposition 5.36 in \cite{BG} for part (2).

\end{proof}

\begin{rem} A consequence of Theorem \ref{twistchi}, as explained in \cite{BG}, is that, if $G$ admits a twisting element, we can twist $L$-algebraic and $C$-algebraic representations into each other, and we can twist $C$-arithmetic and $L$-arithmetic representations into each other. Indeed, if $\pi$ is $L$-algebraic, writing it as $(\pi\otimes\chi^{-1})\otimes \chi$, we obtain that $\pi\otimes\chi^{-1}$ is $C$-algebraic by part (1) of Theorem \ref{twistchi}. Conversely, if $\pi\otimes \chi^{-1}$ is $C$-algebraic, then, twisting it by $\chi$ and applying part (1) of Theorem \ref{twistchi}, shows that $\pi$ is $L$-algebraic. The same reasoning applies for the notions of $C$-arithmetic and $L$-arithmetic automorphic representations. 
\end{rem}

If a group does not admit twisting elements, then there exists a central extension which does.

\begin{thm}\label{splitel} Let $G$ be a connected, reductive algebraic group defined over a number field $F$. Then there exists a central extension $$1\longrightarrow \mathbb{G}_{m} \longrightarrow \widetilde{G}\longrightarrow G \longrightarrow 1$$ such that $\widetilde{G}$ admits a twisting element.
\end{thm}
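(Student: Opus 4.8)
The plan is to build $\widetilde{G}$ by a direct manipulation of based root data, modelled on the passage from $\mathrm{PGL}_n$ to $\mathrm{GL}_n$. Recall from Definition 5.34 of \cite{BG} that a twisting element for a connected reductive group $H/F$ is an element $\theta$ of the character lattice $X^*(T_H)$ of a maximal torus which is fixed by the Galois $*$-action on the based root datum of $H$ and satisfies $\langle\theta,\alpha^\vee\rangle=1$ for every simple coroot $\alpha^\vee$. The half-sum of positive roots $\rho=\sum_i\omega_i$ always meets the pairing condition and is $*$-fixed (the $*$-action permutes the $\omega_i$ as it permutes the simple roots), so the only obstructions for $G$ itself are that $\rho$ need not be the image of a character of the full torus and, even when it is, that a $*$-fixed preimage need not exist. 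Crucially, the relevant action here is the $*$-action, which preserves the based root datum and hence the sets of simple roots and coroots; this is what will let the construction below go through over $F$ with no Galois cohomology.

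If $G$ already admits a twisting element, take $\widetilde{G}:=G\times\mathbb{G}_m$ (with the twisting element of $G$, extended by $0$ on the new factor). Otherwise, let $(X,\Delta,X^\vee,\Delta^\vee)$ be a based root datum for $G$ with its $*$-action, and form
\[
\bigl(\ \widetilde{X}:=X\oplus\mathbb{Z}\varpi,\ \ \Delta,\ \ \widetilde{X}^\vee=X^\vee\oplus\mathbb{Z}\varpi^\vee,\ \ \widetilde{\Delta}^\vee:=\{\,\alpha^\vee+\varpi^\vee : \alpha^\vee\in\Delta^\vee\,\}\ \bigr),
\]
where $\langle\varpi,\varpi^\vee\rangle=1$ and $\varpi$, $\varpi^\vee$ pair to $0$ with $X^\vee$, $X$, and where the $*$-action is extended to fix $\varpi$ and $\varpi^\vee$. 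This is again a based root datum --- the Cartan integers $\langle\alpha,\alpha^\vee+\varpi^\vee\rangle=\langle\alpha,\alpha^\vee\rangle$ are unchanged, and the $*$-action permutes $\Delta$ and $\widetilde{\Delta}^\vee$ --- so it defines a connected reductive group $\widetilde{G}$ over $F$, namely the inner twist, by the same cohomology class that defines $G$, of the quasi-split group with this datum (legitimate since the operation above does not change the adjoint group, so the two groups have the same space of inner forms). The inclusion $X\hookrightarrow\widetilde{X}$, dual to the projection $\widetilde{X}^\vee\to X^\vee$, $\alpha^\vee+\varpi^\vee\mapsto\alpha^\vee$, induces a surjection $\widetilde{G}\to G$ whose kernel is the torus with character group $\widetilde{X}/X\cong\mathbb{Z}$; that $\mathbb{Z}$ carries the trivial Galois action, so the kernel is the split $\mathbb{G}_m$, and the kernel is central because the roots lie in $X$ and so restrict trivially to it. Finally $\varpi\in\widetilde{X}=X^*(\widetilde T)$ is $*$-fixed and satisfies $\langle\varpi,\alpha^\vee+\varpi^\vee\rangle=1$ for each simple coroot, hence is a twisting element for $\widetilde{G}$.

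The one point demanding care --- and the only real work --- is checking that this enlarged datum really descends to a connected reductive $F$-group sitting in a \emph{central} extension of $G$ with \emph{split} kernel $\mathbb{G}_m$. The delicate bit is the splitness: the Weyl group moves $\varpi$ by roots and so does not fix it, yet it acts trivially on the rank-one cocentre direction $\widetilde{X}/X$ spanned by the image of $\varpi$, which is precisely why the kernel stays split; and one must make sure the inner twisting cocycle of $G$ (valued in the common adjoint group) is compatible with the new root datum. Conceptually, though, all that is happening is that adjoining a copy of $\mathrm{GL}_1$ in this twisted fashion converts the class of $\rho$ into a genuine, $*$-fixed character pairing to $1$ with every simple coroot. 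An alternative, closer to the approach in \cite{BG}, is to pass first to a $z$-extension --- so that the obstruction to a Galois-invariant twisting element lives in the vanishing $H^1$ of an induced torus --- and then to cut the kernel down to $\mathbb{G}_m$.
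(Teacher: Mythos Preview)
Your argument is correct. The paper itself does not prove this statement: its proof is the single line ``See part (a) of Proposition 5.37 in \cite{BG}''. The Buzzard--Gee argument referenced there proceeds via a $z$-extension---one first enlarges $G$ to a group with simply connected derived subgroup and kernel an induced torus, so that $\rho$ is a genuine weight and the obstruction to finding a Galois-fixed lift lies in a vanishing $H^1$; one then cuts the kernel down to $\mathbb{G}_m$. You mention this route at the end as an alternative. Your main construction is genuinely different and more direct: rather than passing through simply-connectedness and the cohomological vanishing for induced tori, you shift every simple coroot by a fresh central cocharacter $\varpi^\vee$, so that $\varpi$ is a ready-made twisting element and the $\mathbb{G}_m$-kernel is visible from the outset. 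The price is that one must check by hand that the modified datum is still a based root datum (the new reflections preserve the pairing, so the full root and coroot sets are well-defined and finite), that the adjoint group is unchanged (so the inner-twist cocycle of $G$ transports to $\widetilde{G}$), and that the kernel is central and split; you address each of these, if briefly. As a sanity check, your construction applied to $\mathrm{PGL}_2$ recovers $\mathrm{GL}_2$, exactly as your opening analogy suggests.
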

\begin{proof} See part (a) of Proposition 5.37 in \cite{BG}.

\end{proof}

Before proceeding, let us recall the fundamental result of R. Langlands establishing that every automorphic representation is a subquotient of the parabolic induction of a cuspidal automorphic representation. Following \cite{L}, if $P$ is a parabolic subgroup of $G$ with Levi factor $M$, and if $\sigma=\bigotimes_{v}\sigma_{v}$ is a cuspidal automorphic representation of $M(\mathbb{A}_{F})$, we call an irreducible subquotient of $\text{Ind}_{P}(\sigma)=\bigotimes_{v}\text{Ind}_{P_{v}}(\sigma_{v})$ a constituent. Recall that there exists a finite set of places, $S$, such that for $v\notin S$, the representation $\text{Ind}_{P_{v}}(\sigma_{v})$ has exactly one constituent, denoted by $\pi^{\circ}_{v}$, with non-zero $G(\mathcal{O}_{v})$-invariant vectors. 

\begin{thm}\label{autparind} Let $G$ be a connected, reductive algebraic group defined over a number field $F$. Then the following statements hold:
\begin{enumerate}
\item[(1)] If $P$ is a parabolic subgroup of $G$ with Levi factor $M$, and $\sigma=\bigotimes_{v}\sigma_{v}$ is an irreducible, cuspidal automorphic representation of $M(\mathbb{A}_{F})$, then the constituents of $\text{Ind}_{P}(\sigma)$ are the representations $\pi=\bigotimes_{v}\pi_{v}$, where $\pi_{v}$ is a constituent of $\text{Ind}_{P_{v}}(\sigma_{v})$, and, for all $v\notin S$, $\pi_{v}=\pi^{\circ}_{v}$.
\item[(2)] An irreducible representation $\pi$ of $G(\mathbb{A}_{F})$ is automorphic if and only if there exists a parabolic subgroup $P$ of $G$ with Levi factor $M$, and an irreducible, cuspidal automorphic representation $\sigma$ of $M(\mathbb{A}_{F})$ such that $\pi$ is a constituent of $\text{Ind}_{P}(\sigma)$.
\end{enumerate}
\end{thm}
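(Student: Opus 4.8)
The plan is to deduce part (1) from the formalism of restricted tensor products of admissible representations together with the unramified local theory, and to obtain part (2) from R. Langlands' description of the automorphic spectrum in \cite{L}, of which the statement is essentially a reformulation.

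For part (1), I would begin from the factorisation $\text{Ind}_{P}(\sigma)=\bigotimes_{v}\text{Ind}_{P_{v}}(\sigma_{v})$, the tensor product being restricted with respect to the spherical vectors that single out the constituents $\pi^{\circ}_{v}$ at the places $v\notin S$. The existence of such a finite $S$ and the uniqueness of $\pi^{\circ}_{v}$ is the standard fact, from the unramified theory (Satake, Borel), that for $\sigma_{v}$ unramified the representation $\text{Ind}_{P_{v}}(\sigma_{v})$ has exactly one constituent with nonzero $G(\mathcal{O}_{v})$-fixed vectors. Then the general theory of restricted tensor products of admissible representations (Flath's theorem, and the corresponding statement for subquotients) shows that every irreducible subquotient of such a product is itself a restricted tensor product of irreducible subquotients of the factors, agreeing with the distinguished spherical constituent at almost every place, and conversely that every such tensor product occurs as a constituent. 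This yields the asserted description, with the local components forced to equal $\pi^{\circ}_{v}$ for $v\notin S$.

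For part (2), the nontrivial content, one direction — that every irreducible automorphic representation $\pi$ of $G(\mathbb{A}_{F})$ is a constituent of $\text{Ind}_{P}(\sigma)$ for some irreducible cuspidal automorphic $\sigma$ on a Levi $M$ — is exactly Langlands' theorem that the space of automorphic forms on $G$ is generated by the iterated residues and derivatives of Eisenstein series attached to cuspidal data on parabolic subgroups; I would invoke \cite{L} for this rather than reconstruct the argument. For the reverse direction one checks that each such constituent does occur among automorphic forms: for $\sigma$ in general position the whole induced representation $\text{Ind}_{P}(\sigma)$ is realised in the space of automorphic forms through the (convergent, then meromorphically continued) Eisenstein series, and the remaining constituents, those attached to the poles, are reached by taking residues — again part of the package established in \cite{L}.

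The main obstacle is plainly the first direction of part (2): it rests on the full strength of Langlands' theory of Eisenstein series — their meromorphic continuation and functional equations, and the resulting decomposition of the automorphic spectrum along cuspidal supports. Everything else is either formal bookkeeping (the restricted tensor product argument of part (1)) or an immediate consequence of the unramified local theory. Accordingly, in the write-up I would reduce both parts to precise citations of \cite{L}, spelling out only the restricted tensor product step of part (1) in any detail.
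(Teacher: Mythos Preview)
Your proposal is correct and coincides with the paper's approach: the paper simply records that part (1) is Lemma~1 and part (2) is Proposition~2 of \cite{L}, with no further argument. Your plan to cite \cite{L} for both parts is exactly this; the only difference is that you would spell out the restricted tensor product mechanism behind part (1), which the paper omits entirely.
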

\begin{proof} Part (1) is Lemma 1 in \cite{L}, part (2) is Proposition 2 in \cite{L}.
    
\end{proof}

We will make use of the following result, which we have learnt from Remark 2.10 in \cite{ST}, and for which we supply a proof.   

\begin{prop}\label{lind} Let $G$ be a connected, reductive group defined over a number field $F$, and $\pi$ be an irreducible automorphic representation of $G(\mathbb{A}_{F})$. Let $P$ be a proper parabolic subgroup of $G$ with Levi factor $M$, and $\sigma$ be an irreducible, cuspidal automorphic representation of $M(\mathbb{A}_{F})$ such that $\pi$ is a constituent of $\text{Ind}_{P}(\sigma)$. Then the following two statements hold:
\begin{enumerate}
\item[(1)] The representation $\pi$ is $L$-algebraic if and only if $\sigma$ is $L$-algebraic.
\item[(2)] If $\sigma$ is $L$-arithmetic, then $\pi$ is $L$-arithmetic.
\end{enumerate}
\end{prop}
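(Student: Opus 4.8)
### Proof Proposal for Proposition \ref{lind}

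The plan is to reduce everything to the behaviour of infinitesimal characters and Satake parameters under parabolic induction at each place. First I would fix, for each Archimedean place $v$, a maximal torus $T_v$ of $M_{\mathbb{C}}$ sitting inside a maximal torus $\widetilde{T}_v$ of $G_{\mathbb{C}}$, together with compatible Borel subgroups $B_v^M \subset B_v$; then $X^*(\widetilde{T}_v)$ and $X^*(T_v)$ are canonically identified (a Levi of $G$ shares a maximal torus with $G$). The key local input is the standard formula for the infinitesimal character of a constituent of $\mathrm{Ind}_{P_v}^{G(F_v)}(\sigma_v)$: if $\sigma_v$ has infinitesimal character $\lambda_{\sigma_v} \in \mathfrak{t}_v^{\mathbb{C},*}/W_M$, then every constituent $\pi_v$ of $\mathrm{Ind}_{P_v}(\sigma_v)$ has infinitesimal character $\lambda_{\sigma_v}$ viewed in $\mathfrak{t}_v^{\mathbb{C},*}/W_G$ — that is, induction does not change the infinitesimal character, only the Weyl group one quotients by (this is Harish-Chandra's description of the infinitesimal character of a normalized induced representation; note the $\rho$-shifts are absorbed by the normalization). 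Since $L$-algebraicity asks $\lambda_{\pi_v} \in X^*(\widetilde{T}_v)$ and $L$-algebraicity of $\sigma_v$ asks $\lambda_{\sigma_v} \in X^*(T_v)$, and these lattices agree under the identification above, part (1) follows place-by-place: $\pi$ is $L$-algebraic iff $\lambda_{\pi_v}$ lies in the character lattice for every Archimedean $v$ iff $\lambda_{\sigma_v}$ does, iff $\sigma$ is $L$-algebraic.

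For part (2), I would work on the non-Archimedean side with Satake parameters. By Theorem \ref{autparind}(1), for all $v$ outside a finite set $S$ (enlarging $S$ to include the places where $\sigma$ or the relevant hyperspecial data ramify), $\pi_v = \pi_v^{\circ}$ is the unique unramified constituent of $\mathrm{Ind}_{P_v}(\sigma_v)$, and $\sigma_v$ is itself unramified. The compatibility of the Satake isomorphism with parabolic induction says precisely that the Satake parameter of $\pi_v^{\circ}$ is the image of the Satake parameter of $\sigma_v$ under the natural map $\widehat{M} \to \widehat{G}$ on dual groups (equivalently, on the level of the torus, $X_*(T_{v,d}^M)^{W^M_{v,d}} \to X_*(\widetilde{T}_{v,d})^{\widetilde{W}_{v,d}}$ — restriction along the inclusion of invariant-monomial subalgebras). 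This map is defined over $\mathbb{Q}$: it carries $\mathbb{Q}[X_*(T_{v,d}^M)]^{W^M_{v,d}}$ into $\mathbb{Q}[X_*(\widetilde{T}_{v,d})]^{\widetilde{W}_{v,d}}$. Hence if the Satake parameter of $\sigma_v$ is defined over a number field $E$, precomposing the $\mathbb{Q}$-algebra map $\mathbb{Q}[X_*(\widetilde{T}_{v,d})]^{\widetilde{W}_{v,d}} \to \mathbb{C}$ (which computes the action on $\pi_v^{\circ}$) with... rather: the character of $\mathbb{Q}[X_*(\widetilde{T}_{v,d})]^{\widetilde{W}_{v,d}}$ giving $\pi_v^\circ$ factors as the restriction-to-$M$ map followed by the character of $\mathbb{Q}[X_*(T_{v,d}^M)]^{W^M_{v,d}}$ giving $\sigma_v$, so its image lands in $E$. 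Thus the Satake parameter of $\pi_v$ is defined over the same $E$ for all $v \notin S$, proving $\pi$ is $L$-arithmetic.

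I expect the main obstacle to be bookkeeping rather than anything deep: carefully pinning down the compatibility of tori, Borels and the dual-group maps so that the two lattice identifications (in part (1)) and the $\mathbb{Q}$-rationality of the Satake transfer map (in part (2)) are genuinely canonical and independent of choices, and checking that the normalization conventions for induction ($\rho_P$-twist) are consistent with the conventions used to define $C$- versus $L$-algebraicity. One should also be slightly careful that ``constituent'' here means an irreducible subquotient of the full induced representation, so $\pi_v$ need not be irreducibly induced or even a subrepresentation — but the infinitesimal-character and Satake-parameter statements hold for arbitrary subquotients, so this causes no trouble. I would remark that part (2) only gives an implication, not an equivalence, precisely because distinct cuspidal data (or distinct constituents) can a priori share Satake parameters outside $S$ yet differ in their fields of rationality globally; reversing the implication would require controlling $\sigma$ from $\pi$, which the unramified data alone does not do.
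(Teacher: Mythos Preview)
Your proposal is correct and follows the same line as the paper: for (1) you use that normalized parabolic induction preserves the infinitesimal character (the paper invokes Knapp, Proposition~8.22, for exactly this), together with the identification of character lattices for a shared maximal torus; for (2) you spell out the compatibility of Satake parameters under parabolic induction and the $\mathbb{Q}$-rationality of the transfer map, which is precisely the content of Lemma~5.45 in \cite{BG} that the paper cites. The only difference is that the paper outsources both steps to the literature while you sketch the arguments; your exposition of the Satake-compatibility direction is slightly tangled (the arrow you first write goes the wrong way before you self-correct), but the final statement---that the $G$-character factors through the inclusion $\mathbb{Q}[X_*(\widetilde T_{v,d})]^{\widetilde W_{v,d}}\hookrightarrow\mathbb{Q}[X_*(T_{v,d}^M)]^{W^M_{v,d}}$ followed by the $M$-character---is the right one.
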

\begin{proof} Let $\pi_{v}$ be an Archimedean component of $\pi$. Then $\pi_{v}$ is a constituent of $\text{Ind}_{P_{v}}(\sigma_{v})$, and $\sigma_{v}$ is an irreducible admissible representation of the Levi factor $M_{v}$ of $P_{v}$. Writing $M_{v}$ as $M^{\circ}_{v}A_{v}$, with $A_{v}$ denoting the split component of $M_{v}$, then we can write $\sigma_{v}$ as the tensor product of an admissible, irreducible representation of $M^{\circ}_{v}$ and a character of $A_{v}$. With this observation, it follows from Proposition 8.22 in \cite{Knapp} that $\pi_{v}$ and $\sigma_{v}$ have the same infinitesimal character. Since this is true for all Archimedean places, it follows that $\sigma$ is $L$-algebraic if and only if $\pi$ is $L$-algebraic. \\
Part (2) is a special case of Lemma 5.45 in \cite{BG}.
\end{proof}

We can now prove the main result of this section.

\begin{thm}\label{redcusp} Let $G$ be a connected, reductive group defined over a number field $F$. If every irreducible, $C$-algebraic, cuspidal automorphic representation of $G(\mathbb{A}_{F})$ is $C$-arithmetic, then every irreducible, $C$-algebraic automorphic representation of $G(\mathbb{A}_{F})$ is $C$-arithmetic, and every irreducible, $L$-algebraic automorphic representation of $G(\mathbb{A}_{F})$ is $L$-arithmetic. 
\end{thm} 
\begin{proof} We distinguish four cases.
\begin{enumerate}
\item[(I)] Let $\pi$ be an irreducible, $C$-algebraic, cuspidal automorphic representation of $G(\mathbb{A}_{F})$.  Then the result holds by assumption.
\item[(II)] Let $\pi$ be an irreducible, $L$-algebraic, cuspidal automorphic representation of $G(\mathbb{A}_{F})$. By Theorem \ref{splitel}, there exists a central extension $$1\longrightarrow \mathbb{G}_{m}\longrightarrow \widetilde{G}\longrightarrow G\longrightarrow 1$$ such that $\widetilde{G}$ admits a twisting element. Let $\tilde{\pi}$ denote the automorphic representation of $\widetilde{G}(\mathbb{A}_{F})$ obtained by lifting $\pi$ along the surjection \\ $\widetilde{G}(\mathbb{A}_{F})\twoheadrightarrow G(\mathbb{A}_{F})$.
Then, by Proposition \ref{liftcusp}, $\tilde{\pi}$ is cuspidal and $L$-algebraic. By Theorem \ref{twistchi}, we can twist $\tilde{\pi}$ to obtain a cuspidal $C$-algebraic automorphic representation of $\widetilde{G}(\mathbb{A}_{F})$, which is therefore $C$-arithmetic by assumption. By Theorem \ref{twistchi}, the representation $\tilde{\pi}$ is $L$-arithmetic, and so is $\pi$ by Proposition \ref{liftcusp}.
\item[(III)] Let $\pi$ be an irreducible, $L$-algebraic automorphic representation of $G(\mathbb{A}_{F})$ which is not cuspidal. Then, by part (2) of Theorem \ref{autparind}, $\pi$ is a constituent of $\text{Ind}_{P}(\sigma)$, where $P$ is a parabolic subgroup of $G$ with Levi factor $M$, and $\sigma$ is a cuspidal automorphic representation of $M(\mathbb{A}_{F})$. By (1) of Proposition \ref{lind}, $\sigma$ is $L$-algebraic and, arguing as in case (II), we obtain that $\sigma$ is $L$-arithmetic. By part (2) of Proposition \ref{lind}, it follows that $\pi$ is $L$-arithmetic.
\item[(IV)] Let $\pi$ be an irreducible, $C$-algebraic automorphic representation of $G(\mathbb{A}_{F})$ which is not cuspidal. By Theorem \ref{splitel}, there exists a central extension $$1\longrightarrow \mathbb{G}_{m}\longrightarrow \widetilde{G}\longrightarrow G\longrightarrow 1$$ such that $\widetilde{G}$ admits a twisting element. Let $\tilde{\pi}$ denote the automorphic representation of $\widetilde{G}(\mathbb{A}_{F})$ obtained by lifting $\pi$ along the surjection \\ $\widetilde{G}(\mathbb{A}_{F})\twoheadrightarrow G(\mathbb{A}_{F})$. By part (1) of Proposition \ref{liftcusp}, $\tilde{\pi}$ is $C$-algebraic. By Theorem \ref{twistchi}, we can twist $\tilde{\pi}$ to obtain an $L$-algebraic automorphic representation of $\widetilde{G}(\mathbb{A}_{F})$. Arguing as in case (III), we obtain that this twist of $\tilde{\pi}$ is $L$-arithmetic. By Theorem \ref{twistchi}, it follows that $\tilde{\pi}$ is $C$-arithmetic and we conclude that $\pi$ is $C$-arithmetic by part (1) of Proposition \ref{liftcusp}. 
\end{enumerate}
\end{proof}

\newpage

Department of Mathematics, University of Luxembourg, Maison du \\ Nombre, 6 Avenue de la Fonte, L-4364, Esch-sur-Alzette, Luxembourg. \\

\textit{Email address:} fabio.larosa@uni.lu\\

\textbf{Data Availability:} No data has been used that cannot be retrieved from the references in the bibliography.

\end{document}